\documentclass{amsart}

\usepackage{amsmath}
\usepackage{amsfonts}
\usepackage{amssymb,amscd,amsthm}
\usepackage{mathrsfs}
\usepackage[bookmarksnumbered, hidelinks, linktocpage, plainpages]{hyperref}
\usepackage{indentfirst}
\usepackage{extarrows}
\usepackage{enumitem}
\usepackage{mathtools}
\usepackage{appendix}
\usepackage{cleveref}
\usepackage{bm}

\newtheorem{theorem}{Theorem}[section]
\newtheorem{lemma}[theorem]{Lemma}

\theoremstyle{definition}
\newtheorem{definition}[theorem]{Definition}
\newtheorem{notation}[theorem]{Notation}
\theoremstyle{remark}
\newtheorem{remark}[theorem]{Remark}

\setcounter{tocdepth}{1}
\setcounter{secnumdepth}{4}

\numberwithin{equation}{section}

\newcommand{\R}{\mathbb{R}}

\newcommand{\p}{\partial}
\newcommand{\OO}{\mathbb O}

\newcommand{\Z}{\mathbb Z}
\newcommand{\f}{\bm{f}}
\newcommand{\g}{\bm{g}}
\newcommand{\e}{\bm{e}}
\newcommand{\E}{\bm{E}}

\begin{document}\title[Discrete octonionic Dirac operators]{Discrete Octonionic Dirac Operators on Bounded Lattices: Boundary Integral Theory and Scaling Limits}

\author{Guangbin Ren}
\email[G.~Ren]{rengb@ustc.edu.cn}
\address{School  of Mathematical   Sciences, University of Science and Technology of China, Hefei 230026, China}

\author{Xin Zhao}
\email[X.~Zhao]{zx130781@mail.ustc.edu.cn}
\address{Department of Mathematics, University of Science and Technology of China, Hefei 230026, China}

 \date{}
\subjclass[2020]{Primary 30G35, 39A12; Secondary 17D99, 42A38, 47G10, 65E05}

\thanks{The authors were supported by the National Natural Science Foundation of China (Grant No.~12571090).}

\keywords{Discrete octonionic analysis, Dirac operators, Cauchy--Pompeiu formula, Plemelj projections, scaling limits}

\begin{abstract}
We develop a boundary integral theory for discrete octonionic Dirac operators on bounded subsets of $\Z_h^8$. In the octonionic setting the usual passage from Stokes' theorem to a Cauchy--Pompeiu formula is obstructed by non-associativity. We resolve this difficulty by introducing a star product that builds the associator directly into the boundary kernel. This yields discrete Cauchy--Pompeiu and Cauchy formulas, discrete Cauchy--Bitsadze and Teodorescu transforms, and a discrete Sokhotski--Plemelj jump formula with the associated projection operators. We then compare the discrete and continuous fundamental solutions by using the asymptotic expansion theory of multiple Fourier transforms due to Shivakumar and Wong. Since the periodic Fourier symbol has a finite constellation of corner singularities rather than a single singularity, the classical single-singularity method does not apply. As an application we obtain quantitative kernel estimates and prove a scaling-limit characterization of octonionic regularity: on sufficiently regular bounded domains, a $C^3$ octonionic function is regular if and only if it is the scaling limit of discrete regular functions.
\end{abstract}

\maketitle

\section{Introduction}

Boundary integral formulas are the organizing principle of analytic function theory: they recover interior values from boundary data, generate singular integral projections, and provide the mechanism behind discrete-to-continuum limits. In the discrete complex setting this program goes back to Isaacs, Ferrand and Duffin and was reshaped by Mercat, Bobenko--Mercat--Suris, Chelkak--Smirnov and Skopenkov, who connected discrete analyticity with boundary value problems, conformal invariants and scaling limits \cite{Isaac,Ferrand,Du,Me,BMS,CS2011,CS2012,Smirnov,Sm,Sk}. In parallel, the Dirac-operator approach to Clifford and quaternionic analysis was developed by G\"urlebeck and Spr\"ossig, Cerejeiras, K\"ahler, Ku and Sommen, and, on lattices, by Ren and Zhu in the discrete quaternionic setting \cite{CSSS2004,GS2,CKS,RZ2017,RZ2022}. The octonionic case stands precisely at the meeting point of these two traditions.

It is also the first place where the classical mechanism genuinely fails. Baez emphasized the exceptional role of the octonions in algebra and geometry \cite{B2002}. In analysis, Li, Peng and Qian showed that even continuous octonionic Cauchy integrals on Lipschitz surfaces require delicate control of non-associativity \cite{LPT2008}. On a lattice the conflict is sharper: the usual route from Stokes' theorem to Cauchy--Pompeiu is no longer formal, because associator terms survive, and exactness depends on matching the boundary weights with the forward and backward differences that build the symmetric discrete Dirac operator. Discrete octonionic constructions also appear in work of Krausshar and Legatiuk \cite{KLL2021,KL}. To the best of our knowledge, however, the present paper gives the first construction of a discrete Cauchy--Pompeiu formula on bounded lattice domains that accounts for non-associativity through a star product, together with a scaling-limit characterization of octonionic regularity.

The present paper resolves both difficulties. Our first idea is algebraic: we introduce a star product that incorporates the associator directly into the boundary kernel, so that the boundary term can still be written in a workable octonionic form. Our second idea is geometric: the discrete normal enters through the two dual pairings
\[
 n_l^+\leftrightarrow \partial_l^{-,h},
 \qquad
 n_l^-\leftrightarrow \partial_l^{+,h},
\]
which is exactly the pairing required by the symmetric operator
\[
 \bm D^h=\sum_{l=0}^7 \e_l\,\frac{\partial_l^{+,h}+\partial_l^{-,h}}{2}.
\]
This restores exact discrete Stokes and Cauchy--Pompeiu formulas, and from them the discrete Cauchy and Teodorescu operators, the Sokhotski--Plemelj jump formula, and the Plemelj projections. The analytic part of the paper addresses a different obstruction: the Fourier symbol of the discrete kernel has finitely many corner singularities on the torus, so Thom\'ee's single-singularity method \cite{Th} does not apply. We instead use the asymptotic expansion theory of multiple Fourier transforms due to Shivakumar and Wong \cite{SW1979,Wo}.

Our first main theorem is an exact discrete Cauchy--Pompeiu representation.

\medskip\noindent\textbf{Main theorem A.}
\emph{Let $B\subset \Z_h^8$ be bounded and let $\f:\overline B\to\OO$. Then}
\[
\chi_B(y)\f(y)
=\int_{\partial B}\bm K^h(x,y)*\f(x)\,dS(x)
+\int_B \E^h(y-x)(\bm D^h\f)(x)\,dV^h(x).
\]
\emph{In particular, if $\f$ is discrete regular in $B$, then $\f$ is recovered exactly from its boundary values.} See \Cref{2023-09-11 19:15:35}. This yields the discrete Sokhotski--Plemelj formula and the Plemelj projections; see \Cref{2023-09-29 10:56:55,2023-09-29 17:27:26,2023-09-29 18:34:59}.

The bridge to the continuum is the sharp kernel estimate
\[
|\E^h(x)-\omega(h^{-1}x)\E(x)|\le \frac{Ch}{|x|^8+h^8},
\qquad x\in \Z_h^8,
\]
proved in \Cref{2023-09-30 19:16:04}. This estimate is the key analytic input in the scaling-limit argument.

Our second main theorem shows that octonionic regularity is exactly the continuum shadow of discrete regularity.

\medskip\noindent\textbf{Main theorem B.}
\emph{Let $B\subset\R^8$ be a bounded domain with $C^3$-boundary, and let $B^h\subset\Z_h^8$ converge to $B$ with exterior excess $O(h^2)$. Then}
    $$\f\in C^3(\overline B,\OO)\ \text{is regular on }B
\quad\Longleftrightarrow\quad\exists\,\f^h:\Z_h^8\to\OO\ \text{discrete regular on }B^h
\ \text{with}\ 
\max_{B^h\cap B}\|\f^h-\f\|\to 0.$$

\emph{Thus a $C^3$ octonionic function is regular exactly when it is the scaling limit of discrete regular functions.} See \Cref{2023-09-11 20:34:07,2023-09-11 20:33:47}. In this sense our results place octonionic regularity on the same discrete-to-continuum footing on which Chelkak--Smirnov and Skopenkov treated discrete holomorphicity, and Ren and Zhu treated the associative quaternionic case \cite{CS2011,Sk,RZ2017,RZ2022}.

The paper is organized as follows. Section~2 fixes the octonionic and lattice notation. Section~3 proves the dual discrete Stokes formulas and derives the Cauchy--Pompeiu formula together with the Cauchy and Teodorescu operators. Section~4 compares the discrete and continuous fundamental solutions and proves the kernel estimate above. Section~5 develops the discrete Sokhotski--Plemelj theory and proves the scaling-limit characterization of octonionic regularity.

\section{Foundations of Discrete Octonionic Analysis}

In this section we collect the basic notions and notation used throughout the paper.

\subsection{Octonion Algebra}
The octonion algebra $\OO$ is the largest normed division algebra; see, for example, \cite{B2002}. It is an eight-dimensional real vector space with standard basis
  \[
\e_0=1,\e_1,\cdots,\e_7
\]
and multiplication rules
 \[
\e_i\e_j=\begin{cases}
    \e_j,\quad &i=0,\\
    \e_i,\quad &j=0,\\
    -\delta_{i,j} +\varepsilon_{ijk}\e_k, &otherwise,
 \end{cases}
\]
where $\delta_{i,j}$ symbolizes the Kronecker delta, and $\varepsilon_{ijk}$ is an entirely antisymmetric tensor that equals 1 when the sequence \(ijk\) is one of:
\[
123, 145, 176, 246, 257, 347, 365.
\]
The octonions, by nature, are non-commutative and non-associative, as evidenced by the relation:
  \[
(\e_1\e_2)\e_4=-\e_1(\e_2\e_4).
\]
  Further, the relationship can be generalized as:
   \begin{equation}
    \label{2023-09-13 18:46:08}
    (\e_i\e_j)\e_k=\begin{cases}
        \e_i(\e_j\e_k),& k=0,i,j,i\oplus j,\\\\
        -\e_i(\e_j\e_k),& otherwise.
     \end{cases}
 \end{equation}
For distinct $i,j\in\{1,\dots,7\}$, the symbol $i\oplus j$ denotes the unique index $m\in\{1,\dots,7\}$ such that $\e_i\e_j=\pm \e_m$; equivalently, $|\varepsilon_{ijm}|=1$. In particular, $\oplus$ is not addition modulo $8$; see \cite{RZhao}.
Throughout the paper, octonionic conjugation is given by
\[
\overline{\e_0}=\e_0,
\qquad
\overline{\e_j}=-\e_j,\quad j=1,\dots,7.
\]

 \begin{definition}
   The associator for any elements $a,b,c \in \OO$ is defined as
\[
[a,b,c]:=(ab)c-a(bc).
\]
 \end{definition}

 \begin{lemma}[\cite{S1954,B2002}]
   The octonion algebra $\OO$ exhibits both alternativity and flexibility, implying that
\[
[a,a,b]=[b,a,a]=0
\]
and
     \[
[a,b,a]=0
\]
for any elements $a,b \in \OO$.
 \end{lemma}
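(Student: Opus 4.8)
The plan is to reduce all three identities to a single structural fact about the associator on the basis, namely that the array $A(i,j,k):=[\e_i,\e_j,\e_k]$ is \emph{alternating}, and then to read off left alternativity, right alternativity, and flexibility as the three transposition-antisymmetries of $A$. Since octonionic multiplication is $\R$-bilinear, the associator $[\bm a,\bm b,\bm c]=(\bm{ab})\bm c-\bm a(\bm{bc})$ is $\R$-trilinear. Writing $\bm a=\sum_i a_i\e_i$ and $\bm b=\sum_j b_j\e_j$ and expanding, the real coefficients attached to the repeated argument appear symmetrically; hence $[\bm a,\bm a,\bm b]=0$ for all $\bm a,\bm b$ is equivalent to $A(i,j,k)+A(j,i,k)=0$ for all indices, $[\bm b,\bm a,\bm a]=0$ to $A(i,j,k)+A(i,k,j)=0$, and $[\bm a,\bm b,\bm a]=0$ to $A(i,j,k)+A(k,j,i)=0$. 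So it suffices to prove that $A$ changes sign under each transposition of its three indices.

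First I would dispose of the degenerate basis cases. If some index is $0$ then $\e_0=1$ associates with everything and $A=0$; if two nonzero indices coincide, the repeated factor contributes $\e_i^2=-1$, a real scalar, and a direct check gives $A(i,i,k)=A(i,k,i)=A(k,i,i)=0$. This establishes vanishing whenever two indices agree. For three distinct nonzero indices I would invoke the generalized multiplication relation \eqref{2023-09-13 18:46:08}, which gives $A(i,j,k)=(\sigma(i,j,k)-1)\,\e_i(\e_j\e_k)$, where $\sigma=+1$ exactly when $k\in\{0,i,j,i\oplus j\}$; for distinct nonzero indices this means $A(i,j,k)=0$ precisely on the quaternionic triples $k=i\oplus j$ and $A(i,j,k)=-2\,\e_i(\e_j\e_k)$ otherwise. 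Because $\oplus$ is a commutative group operation in which every element is its own inverse, the condition $k=i\oplus j$ is equivalent to $i\oplus j\oplus k=0$ and is therefore invariant under all permutations of $(i,j,k)$, so the vanishing locus of $A$ is already symmetric.

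It then remains to verify antisymmetry of $A$ on the ``anti-associating'' triples, i.e.\ the distinct nonzero $(i,j,k)$ with $k\neq i\oplus j$. Here I would combine $\e_i\e_j=-\e_j\e_i$ with \eqref{2023-09-13 18:46:08} to re-parenthesize $\e_i(\e_j\e_k)$ and track the sign under an adjacent transposition, using the full antisymmetry of $\varepsilon_{ijk}$ and the incidence pattern of the seven triples to make each case a short sign computation. A shortcut that halves the work: once the left and right alternative laws are established (antisymmetry in slots $1,2$ and in slots $2,3$), flexibility is automatic, since $(1\,3)=(1\,2)(2\,3)(1\,2)$ forces $A(i,j,k)=-A(k,j,i)$, whence $[\bm a,\bm b,\bm a]=-[\bm a,\bm b,\bm a]=0$. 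The main obstacle is exactly this case analysis on the anti-associating triples, where non-associativity genuinely bites; one must also take care that relation \eqref{2023-09-13 18:46:08} presupposes distinct imaginary units, so the diagonal cases are handled separately as above. As a computation-free alternative I would note that one may instead introduce the octonionic conjugation and norm and derive both alternative laws from the adjoint identity $\langle \bm a\bm x,\bm y\rangle=\langle \bm x,\bar{\bm a}\bm y\rangle$ together with the multiplicativity $N(\bm a\bm b)=N(\bm a)N(\bm b)$, after which flexibility again follows by the same linearization.
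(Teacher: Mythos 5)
The paper does not prove this lemma at all: it is quoted as a classical fact with citations to Schafer and Baez, so there is no in-paper argument to measure you against. Judged on its own terms, your outline is the standard one and its skeleton is sound: the associator is $\R$-trilinear, so linearization correctly reduces each of the three identities to antisymmetry of $A(i,j,k)=[\e_i,\e_j,\e_k]$ under one transposition; the degenerate cases (an index equal to $0$, or two indices equal) do check out directly from $\e_i^2=-1$ and $\e_i\e_j=-\e_j\e_i$; the observation that $k=i\oplus j$ is permutation-invariant because it reads $i\oplus j\oplus k=0$ in $\mathbb Z_2^3$ is correct and neatly shows the vanishing locus of $A$ is symmetric; and the group-theoretic shortcut $(1\,3)=(1\,2)(2\,3)(1\,2)$ legitimately makes flexibility a corollary of the two alternative laws.

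The one substantive issue is that the step carrying all the content is announced but not performed. On the anti-associating triples your reduction via \eqref{2023-09-13 18:46:08} gives $A(i,j,k)=-2\,\e_i(\e_j\e_k)$, so left alternativity becomes the identity $\e_i(\e_j\e_k)=-\e_j(\e_i\e_k)$ (and right alternativity the analogous statement in the last two slots), which is a genuine assertion about the structure constants $\varepsilon$ on the Fano plane — equivalent to $\varepsilon_{jkm}\varepsilon_{i,m,i\oplus j\oplus k}=-\varepsilon_{ikm'}\varepsilon_{j,m',i\oplus j\oplus k}$ with $m=j\oplus k$, $m'=i\oplus k$ — and it is exactly as hard as what you set out to prove. ``A short sign computation in each case'' is plausible (and true), but until those cases are written out the proposal is an outline rather than a proof. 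Your closing alternative — deriving alternativity from $N(\bm a\bm b)=N(\bm a)N(\bm b)$ and the adjoint identity for a composition algebra — is in fact the route the cited sources take and would avoid the Fano-plane bookkeeping entirely; if you want a self-contained argument, that is the one to flesh out.
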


The octonions also satisfy the Moufang identities:

 \begin{lemma}
    For any $a,b,c\in \OO,$ one has \begin{equation*}
       (aba)c= a(b(ac)), \quad  c(aba)=((ca)b)a, \quad  a(bc)a= (ab)(ca).
    \end{equation*}
 \end{lemma}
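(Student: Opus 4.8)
These are the classical Moufang identities for an alternative algebra, so the plan is to derive them from the alternativity and flexibility of the preceding lemma rather than from the explicit multiplication table. First I would linearize the three laws $[a,a,b]=0$, $[b,a,a]=0$, $[a,b,a]=0$: replacing $a$ by $a+d$ and using that the associator $[a,b,c]=(ab)c-a(bc)$ is $\R$-trilinear yields $[a,d,b]=-[d,a,b]$, $[b,a,d]=-[b,d,a]$, and $[a,b,d]=-[d,b,a]$. Since these transpositions generate $S_3$, the associator is fully skew-symmetric, $[\sigma a,\sigma b,\sigma c]=\operatorname{sgn}(\sigma)[a,b,c]$. This is the structural fact on which everything rests.

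Next I would reduce each Moufang identity to an identity between associators. Writing $aba=(ab)a=a(ba)$ (legal by flexibility) and inserting the common term $(ab)(ac)$, a short manipulation gives $(aba)c-a(b(ac))=[ab,a,c]+[a,b,ac]$, so the left identity is equivalent to $[ab,a,c]+[a,b,ac]=0$. Similarly, inserting $((ab)c)a$ and using flexibility gives $a(bc)a-(ab)(ca)=[ab,c,a]-[a,b,c]a$, so the middle identity is equivalent to $[ab,c,a]=[a,b,c]a$. At this point the conjugation anti-automorphism $x\mapsto\bar x$ of $\OO$ (with $\overline{xy}=\bar y\,\bar x$) is useful: applying it to the left identity and renaming $a,b,c$ yields exactly the right identity $c(aba)=((ca)b)a$, so the right identity comes for free once the left one is proved, while the middle identity is invariant under conjugation and must be handled on its own.

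The hard part is the last step: proving the residual associator identities such as $[ab,a,c]+[a,b,ac]=0$. I expect the main obstacle to be that skew-symmetry alone is not enough — combining it with the universal cocycle identity $[wx,y,z]-[w,xy,z]+[w,x,yz]=[w,x,y]z+w[x,y,z]$ (valid in any algebra) only reproduces the same expression tautologically. The genuinely non-associative content has to be injected through a further linearization of the alternative laws (the Kleinfeld identities), or equivalently through Artin's theorem that any two elements of $\OO$ generate an associative subalgebra; this is where the proof acquires its real weight.

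As a completely elementary alternative tailored to $\OO$, one may instead argue by multilinearity: since both sides of each identity are $\R$-trilinear in $a,b,c$, it suffices to verify them for basis elements $a=\e_p$, $b=\e_q$, $c=\e_r$. The cases involving $\e_0=1$ are immediate, and for the remaining ones the sign rule \eqref{2023-09-13 18:46:08} together with $\e_p^2=-1$ collapses most triples, the repeated factor $\e_p$ forcing the relevant subalgebra to be associative. Here the obstacle is purely organizational: one must exploit the index symmetries of the structure constants $\varepsilon_{ijk}$ to keep the case analysis short rather than enumerating all $8^3$ triples.
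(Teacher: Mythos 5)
The paper offers no proof of this lemma at all---it is quoted as a standard fact about alternative algebras (the references \cite{S1954,B2002} attached to the preceding lemma are the intended source)---so there is no argument of the paper's to compare against; your proposal has to stand on its own. Its first half does: the linearization of alternativity to full skew-symmetry of the associator is correct, the reductions $(aba)c-a(b(ac))=[ab,a,c]+[a,b,ac]$ and $a(bc)a-(ab)(ca)=[ab,c,a]-[a,b,c]a$ check out, and the observation that conjugation ($\overline{xy}=\bar y\,\bar x$) transports the left Moufang identity to the right one is a genuine economy. But the proposal then stops exactly at the step you yourself identify as carrying ``the real weight'': the residual associator identities are never established. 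Gesturing at the Kleinfeld identities is the right instinct, but invoking Artin's theorem instead is circular in the standard development---in Schafer and in essentially every textbook treatment, Artin's theorem is \emph{deduced from} the Moufang identities, not the other way around. As written, the main route is an unfinished reduction.

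The ``completely elementary alternative'' has a concrete error that would invalidate it even if carried out: the two outer identities are \emph{quadratic} in $a$, not trilinear in $(a,b,c)$, so verifying them for $a=\e_p$, $b=\e_q$, $c=\e_r$ does not suffice. Writing $a=\sum_p a_p\e_p$, the identity $(aba)c=a(b(ac))$ is equivalent to the polarized statement $((\e_pb)\e_q)c+((\e_qb)\e_p)c=\e_p(b(\e_qc))+\e_q(b(\e_pc))$ for all pairs $p\neq q$ as well as the diagonal cases; checking only basis triples misses all the cross terms. (Compare: $[\e_p,\e_p,\e_q]=0$ holds on basis elements for \emph{any} choice of structure constants with $\e_p^2=-1$, yet left alternativity is a nontrivial constraint precisely because of the off-diagonal terms.) If you want the brute-force route, you must verify the polarized identities, and at that point the case analysis over the structure constants $\varepsilon_{ijk}$ is the entire proof. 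Either finish the associator computation (via the four-variable Kleinfeld function $f(a,b,c,d)=[ab,c,d]-b[a,c,d]-[b,c,d]a$ and its skew-symmetry) or simply cite the result as the paper implicitly does.
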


\subsection{Basic concepts in the discrete octonionic analysis}
We work on the lattice $\Z_h^8=(h\Z)^8$, $h>0$, viewed as a discretization of $\R^8$. We identify a lattice point with an octonion by writing
\[ \bm x = \sum_{i=0}^7 \e_i x_i \]
for $x=(x_0,\dots,x_7)\in\Z_h^8$, where $\{\e_0,\dots,\e_7\}$ is the standard basis of $\OO$ over $\R$.

 \begin{definition}
 	Given a subset \(B\subset \Z^8_h\), we write \(M(B,\OO)\) for the set of all functions from \(B\) to \(\OO\). For a function \(\f\in M(\Z^8_h,\OO)\), the discrete octonionic Dirac operator is defined by
 	\[
 		\bm D^h \f = \sum_{l=0}^7 \e_l \p^h_l \f,
 		\qquad
 		\p^h_l=\frac12(\p_l^{+,h}+\p_l^{-,h}).
 	\]
 	Here
 	\[
 		\p_l^{+,h}\f(x)=\frac{\f(x+he_l)-\f(x)}{h},
 		\qquad
 		\p_l^{-,h}\f(x)=\frac{\f(x)-\f(x-he_l)}{h}.
 	\]
 	If \(\f\) is defined only on a subset \(A\subset \Z_h^8\), these expressions are used only at those points \(x\in A\) for which all shifted lattice points appearing in the formula also belong to \(A\).
 \end{definition}
 We also consider different types of discrete boundaries, opting for a two-layered boundary in this context.
 \begin{definition}
 	For a subset \(B\) within $\Z^8_h$, the boundary \( \p B \) of \(B\) is defined as:
 	\[ \p B = \{ x \in \Z^8_h | N(x) \cap B \neq \emptyset, N(x) \cap (\Z^8_h \setminus B) \neq \emptyset \} \]
 	where \(N(x)\) denotes the neighborhood of \(x\):
 	\[ N(x) = \{ x, x \pm he_0, \cdots, x \pm he_7 \}. \]
 \end{definition}
 We can also define the discrete closure \( \overline{B} \) and the interior \( B^\circ \) of \(B\) as:
 \[ \overline{B} = B \cup \p B, \quad B^\circ = B \setminus \p B. \]
 In this study, we represent the set of functions \( \f : \overline B \rightarrow \OO \) by \( M(\overline{B}, \OO) \).
 \begin{definition}
 	A discrete function \( \f \in M(\overline{B}, \OO) \) is termed discrete octonionic regular in \(B\) if \( \bm D^h \f(x)=0 \) for every \(x\in B^\circ\).
 \end{definition}
 Additionally, we delve into the concepts of discrete boundary measure and discrete outward normal vector in relation to $\Z^8_h$.
 \begin{definition}\label{2023-09-06 21:09:12}
 	For a subset \(B\) in $\Z^8_h$, the discrete outward normal vector at a boundary point \(x\) of \( \p B \) is given by:
 	\[ n = (n_0^+, n_0^-, \cdots, n_7^+, n_7^-), \]
 	where:
 	\[ n_l^{\pm}(x) = \frac{-2\p_l^{\pm,h} \chi_B(x)}{\sqrt{\sum_{i=0}^7 [\left( \p_i^{+,h} \chi_B \right)^2 + \left( \p_i^{-,h} \chi_B \right)^2 ]}}. \]
 	The discrete octonionic outside normal vector \( \bm n \) on \( \p B \) is then defined as:
 	\[ \bm n = \sum_{l=0}^7 \frac{1}{2}(n_l^+ + n_l^-) \e_l. \]
 \end{definition}
 \begin{remark}
 	The functions \( n_l^{\pm} \) are defined on the boundary \( \p B \subset \Z^8_h \). By extending them by zero to the whole lattice \( \Z^8_h \), we may interpret \Cref{2023-09-13 18:19:13} pointwise. This pointwise formulation is convenient in the proofs below and avoids the distributional language used in \cite{RZ2017} for the quaternionic case.
 \end{remark}

 \section{Octonionic Analysis: The Discrete Stokes Theorem}
 The realm of octonionic analysis presents a unique version of the continuous Stokes theorem, which differs notably from the classical interpretation found in complex analysis. This octonionic variant introduces an added term linked to the associator, a concept detailed in \cite{LPT2008}.

 Various methodologies have been explored concerning the discrete integral theory across different contexts. For insights into the complex and quaternionic scenarios, we refer to  references \cite{CS2011}, \cite{CS2012}, \cite{GH2001}, and \cite{RZ2017}.

 \subsection{Multiplicative Discrete Derivative Formula}
 A significant challenge in the discrete Stokes theorem is the complex nature of the discrete derivative formula under multiplication, especially when contrasted against the continuous scenario. To navigate this complexity, we introduce specific shift operators to assimilate the extra terms.

 \begin{definition}
 	Given any point $y$ in $\Z^8_h$, the translation operator
 	\[
 		\tau_y:M(\Z_h^8,\OO)\longrightarrow M(\Z_h^8,\OO)
 	\]
 	is defined by
 	\[
 		(\tau_y\f)(x):=\f(x-y).
 	\]
 	When \(\f\) is defined only on a subset of \(\Z_h^8\), the notation \(\tau_y\f(x)\) is used only when the value \(\f(x-y)\) is well defined.
\end{definition}

\begin{lemma}[Leibniz rule I]
	Let $B$ be a subset in $\Z^8_h.$ Then for any $\f,\g\in M(B,\OO),$ we have
	
	\begin{equation}
		\label{2023-09-06 21:57:31}
		\begin{aligned}
			\p^{+,h}_l(\f \g)&=\p_l^{+,h}\f\g+\tau_{-he_l}\f\p_l^{+,h}\g,\\
			\p^{+,h}_l(\f \g)&= \p^{+,h}_l \f \tau_{-he_l}\g +\f \p_l^{+,h} \g,\\
			\p^{-,h}_l(\f \g)&   = \p_l^{-,h}\f \g+ \tau_{he_l}\f \p_l^{-,h}\g,\\
			\p^{-,h}_l(\f \g)&=\p_l^{-,h}\f \tau_{he_l}\g+\f\p_{l}^{-,h}\g.
		\end{aligned}
	\end{equation}
\end{lemma}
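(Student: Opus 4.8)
The plan is to prove all four identities by direct expansion from the definitions of the forward and backward difference operators, relying only on the bilinearity of octonionic multiplication. The crucial observation is that, although $\OO$ is non-associative, its multiplication is still distributive over addition and compatible with real scalars; since every term appearing in these identities is a product of exactly \emph{two} octonionic values, no triple product — and hence no associator — is ever formed. Consequently the classical derivation of the discrete product rule carries over verbatim to the octonionic setting.

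Concretely, for the first identity I would start from
\[
\p_l^{+,h}(\f\g)(x)=\frac1h\bigl[\f(x+he_l)\g(x+he_l)-\f(x)\g(x)\bigr],
\]
and insert the mixed term $\f(x+he_l)\g(x)$, rewriting the right-hand side as
\[
\frac1h\bigl[\f(x+he_l)-\f(x)\bigr]\g(x)+\f(x+he_l)\,\frac1h\bigl[\g(x+he_l)-\g(x)\bigr].
\]
Here the regrouping uses only right- and left-distributivity. Recognizing that $\tau_{-he_l}\f(x)=\f(x+he_l)$, together with the definition of $\p_l^{+,h}$, yields exactly $\p_l^{+,h}\f\,\g+\tau_{-he_l}\f\,\p_l^{+,h}\g$. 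The second identity follows identically, except that one inserts the complementary mixed term $\f(x)\g(x+he_l)$, which telescopes into $\p_l^{+,h}\f\,\tau_{-he_l}\g+\f\,\p_l^{+,h}\g$.

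For the two backward identities I would proceed in the same way, expanding $\p_l^{-,h}(\f\g)(x)=\frac1h[\f(x)\g(x)-\f(x-he_l)\g(x-he_l)]$ and telescoping through $\f(x)\g(x-he_l)$ or $\f(x-he_l)\g(x)$, now using $\tau_{he_l}\f(x)=\f(x-he_l)$. Alternatively, one can observe the operator relation $\p_l^{-,h}=\tau_{he_l}\,\p_l^{+,h}$ and the multiplicativity $\tau_{he_l}(\f\g)=(\tau_{he_l}\f)(\tau_{he_l}\g)$, and simply apply $\tau_{he_l}$ to the two forward identities to recover the backward ones; but the direct computation is more transparent. There is no serious obstacle here — the entire content is bookkeeping. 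The single point requiring care, and the reason this lemma stays clean while the full discrete Stokes theorem does not, is to never reassociate: one must keep each product strictly two-factored and attach the shift operator $\tau_{\mp he_l}$ to the correct factor so that the inserted mixed term cancels telescopically.
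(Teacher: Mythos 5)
Your proposal is correct and follows essentially the same route as the paper: expand the difference quotient, insert the appropriate mixed term ($\f(x+he_l)\g(x)$ or $\f(x)\g(x+he_l)$), and telescope, with the backward identities handled analogously. Your added remarks — that only two-factor products occur so no associator can arise, and that the backward cases also follow from $\p_l^{-,h}=\tau_{he_l}\,\p_l^{+,h}$ — are accurate but do not change the argument.
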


\begin{proof}
	For the first two equalities,
	by definition we have
	\begin{equation*}
		\begin{aligned}
			\p_l^{+,h}(\f\g)(x)&= \frac{1}{h}(\f(x+he_l)\g(x+he_l)-\f(x)\g(x))\\&= \frac{1}{h} (\f(x+he_l)\g(x+he_l)-\f(x+he_l)\g(x)+\f(x+he_l)\g(x)-\f(x)\g(x)) \\&= \tau_{-he_l}\f(x) \p^{+,h}_l\g(x) +\p_l^{+,h}\f(x)\g(x),
		\end{aligned}
	\end{equation*}
	and \begin{equation*}
		\begin{aligned}
			\p_l^{+,h}(\f\g)(x)&=\frac{1}{h}(\f(x+he_l)\g(x+he_l)-\f(x)\g(x))\\&=\frac{1}{h}(\f(x+he_l)\g(x+he_l)-\f(x)\g(x+he_l)+\f(x)\g(x+he_l)-\f(x)\g(x))\\&=\p_l^{+,h}\f(x)\g(x+he_l)+\f(x)\p_l^{+,h}\g(x).
		\end{aligned}
	\end{equation*}
	The proofs for the remaining two equalities follow a similar approach.
\end{proof}

To unify the variables of functions on the right side of \eqref{2023-09-06 21:57:31}, we have the following alternatives.
\begin{lemma}
	[Leibniz rule II]
	\begin{equation}
		\label{2023-09-07 17:40:22}
		\begin{aligned}
			\p^{+,h}_l(\f \g)&= \p^{+,h}_l \f \tau_{-he_l}\g +\f \p_l^{-,h} \tau_{-he_l}\g,\\
			\p^{+,h}_l(\f \g)&=\p_l^{-,h} \tau_{-he_l}\f\g+\tau_{-he_l}\f\p_l^{+,h}\g,\\
			\p^{-,h}_l(\f \g)&=\p_l^{-,h}\f \tau_{he_l}\g+\f\p_{l}^{+,h}\tau_{he_l}\g,     \\\p^{-,h}_l(\f \g)&   = \p_l^{+,h}\tau_{he_l}\f \g+ \tau_{he_l}\f \p_l^{-,h}\g.
		\end{aligned}
	\end{equation}
\end{lemma}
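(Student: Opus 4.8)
The plan is to obtain \textbf{Leibniz rule II} not from scratch but as an immediate consequence of \textbf{Leibniz rule I}, \eqref{2023-09-06 21:57:31}, by rewriting a single factor in each of its four identities. The mechanism is a pair of \emph{intertwining relations} between the one-step shift operators $\tau_{\pm he_l}$ and the difference operators $\p_l^{\pm,h}$, which exchange forward and backward differences. This keeps the argument aligned with the derivation already given for \eqref{2023-09-06 21:57:31} and avoids repeating the telescoping computation from the definition.

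First I would record these intertwining relations: for every $\f\in M(B,\OO)$,
\begin{equation*}
\p_l^{-,h}\tau_{-he_l}\f = \p_l^{+,h}\f, \qquad \p_l^{+,h}\tau_{he_l}\f = \p_l^{-,h}\f.
\end{equation*}
Both follow by unwinding the definitions of $\p_l^{\pm,h}$ and $\tau_{\pm he_l}$; for instance,
\begin{equation*}
\p_l^{-,h}\tau_{-he_l}\f(x) = \tfrac1h\bigl(\tau_{-he_l}\f(x)-\tau_{he_l}\tau_{-he_l}\f(x)\bigr) = \tfrac1h\bigl(\f(x+he_l)-\f(x)\bigr) = \p_l^{+,h}\f(x),
\end{equation*}
and the second relation is obtained symmetrically by shifting forward.

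With these in hand, each equality of \eqref{2023-09-07 17:40:22} is produced by substituting one factor in the matching equality of \eqref{2023-09-06 21:57:31}. Concretely, the first identity of Leibniz rule II comes from the second of Leibniz rule I by replacing the right factor via $\p_l^{+,h}\g=\p_l^{-,h}\tau_{-he_l}\g$; the second comes from the first by replacing the left factor via $\p_l^{+,h}\f=\p_l^{-,h}\tau_{-he_l}\f$; and the two $\p^{-,h}_l$-identities are obtained analogously from the last two identities of Leibniz rule I using $\p_l^{-,h}\g=\p_l^{+,h}\tau_{he_l}\g$ and $\p_l^{-,h}\f=\p_l^{+,h}\tau_{he_l}\f$, respectively.

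The computation carries no genuine difficulty, and in particular the non-associativity of $\OO$ plays no role here: every term is a product of exactly two octonion-valued functions, so replacing one factor by an equal octonion preserves the value of the product and no associator is generated. The only point requiring care is the bookkeeping—matching each target identity to its correct parent and tracking whether it is the left or the right factor that must be rewritten, since the order of multiplication cannot be interchanged in $\OO$.
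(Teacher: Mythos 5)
Your proposal is correct and takes essentially the same route as the paper, whose entire proof consists of citing the intertwining identities $\p_l^{+,h}=\p_l^{-,h}\tau_{-he_l}$ and $\p_l^{-,h}=\p_l^{+,h}\tau_{he_l}$ and substituting into Leibniz rule I. You simply make explicit the verification of those identities and the bookkeeping of which factor is rewritten in which parent identity, all of which checks out.
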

\begin{proof}
	This follows from the fact that
	\begin{equation}
		\begin{aligned}
			\p_l^{+,h}&=\p_l^{-,h}\tau_{-he_l},\\
			\p_l^{-,h}&=\p_l^{+,h}\tau_{he_l}.
		\end{aligned}
	\end{equation}
	
\end{proof}

\subsection{Discrete Stokes theorem}
 In this subsection, we will employ the discrete derivative formula in the context of multiplication to formulate a discrete Stokes theorem without the supplementary term. This is in alignment with the methodology delineated in \cite{RZ2017, RZ2022}. As highlighted in \cite{RZ2017, RZ2022}, for a bounded domain  $B\in \R^8$ with a $C^1$-boundary, a unique solution exists for the Gauss system represented as:
 \begin{equation}
 	\label{2023-09-08 09:37:46}
 	\begin{cases}
 		n_l\, dS =-\dfrac{\p}{\p x_l}\chi_B\, dV,\\[2mm]
 		\sum\limits_{l=0}^7 n_l^2=1 \qquad \text{on }\p B.
 	\end{cases}
 \end{equation}
 Here, $dS$ denotes a non-negative regular Borel measure, defined by the restriction of the Lebesgue measure on $\p B$.

 Subsequently, we derive a discrete counterpart of this theorem. Throughout this paper, $dV^h$ symbolizes the Haar measure on the group $\Z^8_h$. More specifically, for any function $\f: \Z^8_h\longrightarrow \OO$, we define
 \[
\int_{\Z^8_h} \f dV^h =\sum\limits_{x\in \Z^8_h} \f(x)h^8.
\]

\begin{definition}\label{2023-09-06 21:08:52}
	Given a set $B$ in $\Z^8_h$, the discrete boundary measure, $S_{\p B}$, on $\p B$ is defined by
    \[
S_{\p B}(U):=\sum\limits_{x\in U} s(x)
\] for any $U\subset   \p B$ for any subset $U$ of $\p B$. Here, the function $s: \p B\longrightarrow \R$ is defined as \[
s(x):=\frac{h^8}{2}\sqrt{\sum\limits_{l=0}^7 \left((\p_l^{+,h}\chi_B)^2+(\p_l^{-,h}\chi_B)^2\right)}.
\]
\end{definition}

\begin{lemma}\label{2023-09-13 18:19:13}
	Let $B$ be a set in $\Z^8_h$. Let $n=(n_0^+,n_0^-,\cdots,n_7^+,n_7^-)$ be the external normal vector on $\p B$ and let $S=S_{\p B}$ represent the boundary measure of $\p B$. Then, we have
    \begin{equation}
        \label{2023-09-06 19:05:40}
        \begin{cases}
            n_l^{\pm} dS = -\p_l^{\pm,h } \chi_B dV^h  ,\\\\
            \sum\limits_{l=0}^7\left((n_l^+)^2+(n_l^-)^2\right)= 4\chi_{\p B}.
        \end{cases}
    \end{equation}

\end{lemma}

\begin{proof}
	We commence by proving the first equation in \eqref{2023-09-06 19:05:40}.
	
	Suppose $x \notin \p B$. Then, the relation $x \in B$ is equivalent to $x \pm he_l \in B$. This yields
	\begin{equation*}
		\p_l^{\pm,h} \chi_B(x) = 0.
	\end{equation*}
	Given the zero-extension, this implies:
	\begin{equation*}
		dS(x) = 0
	\end{equation*}
	Consequently, we establish:
	\begin{equation*}
		n_l^{\pm} dS = -\p_l^{\pm,h } \chi_B dV^h \quad \text{on} \quad \Z_h^8\setminus \p B.
	\end{equation*}
	
	For every $x \in \p B$, the following holds:
	\begin{align*}
		n_l^{\pm}(x)dS(x) &= n_l^{\pm}(x)s(x) \\
		&= \frac{-2\p_l^{\pm,h} \chi_B(x)}{\sqrt{\sum_{i=0}^{7} \left( (\p_i^{+,h}\chi_B)^2 + (\p_i^{-,h}\chi_B)^2 \right)}} \cdot \frac{h^8}{2}\sqrt{\sum_{i=0}^{7} \left( (\p_i^{+,h}\chi_B)^2 + (\p_i^{-,h}\chi_B)^2 \right)} \\
		&= -\p_l^{\pm,h}\chi_B(x)h^8 \\
		&= -\p_l^{\pm,h}\chi_B(x) dV^h.
	\end{align*}
	
	To ascertain the second equation in \eqref{2023-09-06 19:05:40}, we employ the predefined notion that
	\begin{equation*}
		\sum_{l=0}^{7} \left( (n_l^+)^2 + (n_l^-)^2 \right) \bigg|_{x\in \p B} = 4.
	\end{equation*}
	Given that $n_l^\pm$ is null outside $\Z^8_h\setminus \p B$, it follows:
	\begin{equation*}
		\sum_{l=0}^{7} \left( (n_l^+)^2 + (n_l^-)^2 \right) \bigg|_{x\notin \p B} = 0.
	\end{equation*}
	
	This concludes our proof.
\end{proof}

\begin{theorem}[Divergence Principle]\label{2023-09-06 21:25:07}
Let $B$ be a bounded set in  $\Z^8_h$. Then for every   $l=0,\cdots,7$, we have
	\begin{equation}
		\label{2023-09-06 20:18:53}
		\int_{\p B} \f n_l^{\pm} dS =\int_B \p^{\mp,h}_l \f dV^h
	\end{equation}
  for any function $\f \in M(\overline B, \OO)$.
\end{theorem}

\begin{proof}
	Extending the function $\f$ to encompass the entire space $\Z^8_h$, the relationship \eqref{2023-09-06 19:05:40} yields
	\begin{equation}
		\label{2023-09-06 20:57:00}
		\int_{\p B} \f n_l^{\pm}dS = \int_{\Z^8_h} \f n_l^{\pm}dS_{\p B} = -\int_{\Z^8_h} \f \p_l^{\pm,h} \chi_B dV^h.
	\end{equation}
	From the relation
	\begin{equation}\label{2023-09-08 10:36:15}
		\begin{aligned}
			-\int_{\Z^8_h} \f \p_l^{+,h} \chi_B dV^h &= -\frac{1}{h}\int_{\Z^8_h} \f(x)(\chi_B(x+he_l)-\chi_B(x))dV^h\\
			&= \int_{\Z^8_h} \p_l^{-,h} \f(x) \chi_B(x) dV^h(x)\\
			&= \int_B \p_l^{-,h} \f dV^h
		\end{aligned}
	\end{equation}
	and a parallel derivation indicating:
	\begin{equation}\label{2023-09-08 10:36:21}
		-\int_{\Z^8_h} \f \p_l^{-,h} \chi_B dV^h = \int_B \p_l^{+,h} \f dV^h,
	\end{equation}
	integrating the results of \eqref{2023-09-06 20:57:00}, \eqref{2023-09-08 10:36:15}, and \eqref{2023-09-08 10:36:21} completes the proof.
\end{proof}

\begin{theorem}[Stokes Formula in Discrete Octonionic setting]
	For a bounded set $B$ in $\Z^8_h$, for each $l=0,\cdots,7$ and for any functions $\f,\g \in M(\overline B, \OO)$, the following relations hold:
	\begin{equation}
		\label{2023-09-06 19:08:17}
		\begin{aligned}
			\int_{\p B} \f n_l^+ \g \, dS &= \int_B \p_l^{-,h}\f \, \g + \tau_{he_l}\f \, \p_l^{-,h}\g \, dV^h,\\
			\int_{\p B} \f n_l^+ \g \, dS &= \int_B \p_l^{-,h}\f \, \tau_{he_l}\g + \f \, \p_l^{-,h}\g \, dV^h,\\
			\int_{\p B} \f n_l^- \g \, dS &= \int_B \p_l^{+,h}\f \, \g + \tau_{-he_l}\f \, \p_l^{+,h}\g \, dV^h,\\
			\int_{\p B} \f n_l^- \g \, dS &= \int_B \p_l^{+,h}\f \, \tau_{-he_l}\g + \f \, \p_l^{+,h}\g \, dV^h.
		\end{aligned}
	\end{equation}
\end{theorem}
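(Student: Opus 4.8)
The plan is to reduce all four identities to a single invocation of the Divergence Principle (Theorem \ref{2023-09-06 21:25:07}), applied not to $\f$ or $\g$ separately but to their octonionic product $\f\g$, and then to read the four right-hand sides directly off the Leibniz expansions collected in \eqref{2023-09-06 21:57:31}.

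The only place where the non-associativity of $\OO$ could intrude is the insertion of $n_l^{\pm}$ between $\f$ and $\g$, so I would dispose of this first. Each component $n_l^{\pm}$ is, by Definition \ref{2023-09-06 21:09:12}, a \emph{real}-valued function on $\p B$ (a quotient of real difference quotients of $\chi_B$). Real scalars lie in the nucleus of $\OO$ and therefore associate and commute with every octonion, so at each point $x$ one may regroup freely:
\[
\f(x)\,n_l^{\pm}(x)\,\g(x) = n_l^{\pm}(x)\,(\f\g)(x) = (\f\g)(x)\,n_l^{\pm}(x).
\]
No associator is produced, which is exactly why inserting a scalar middle factor is harmless — in sharp contrast to the genuine difficulty of inserting the octonion-valued normal $\bm n$.

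With this reduction in hand the remainder is bookkeeping. For the two $n_l^+$ identities I would apply Theorem \ref{2023-09-06 21:25:07} to $\f\g\in M(B,\OO)$ to obtain
\[
\int_{\p B}\f\,n_l^+\,\g\,dS = \int_{\p B}(\f\g)\,n_l^+\,dS = \int_B \p_l^{-,h}(\f\g)\,dV^h,
\]
and then expand $\p_l^{-,h}(\f\g)$ by, respectively, the third and the fourth equations of \eqref{2023-09-06 21:57:31}, producing the first two displayed identities. For the two $n_l^-$ identities I would use the companion case $\int_{\p B}\f\,n_l^-\,dS=\int_B\p_l^{+,h}\f\,dV^h$ of the Divergence Principle, again applied to $\f\g$, obtaining $\int_B\p_l^{+,h}(\f\g)\,dV^h$, and expand this by the first and second equations of \eqref{2023-09-06 21:57:31}.

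I do not expect a substantive obstacle beyond the scalar-regrouping observation above; the argument is essentially a matching exercise between the two cases of the Divergence Principle and the four Leibniz expansions of $\p_l^{\pm,h}(\f\g)$. The one point requiring care is index and shift consistency — pairing $n_l^+$ with $\p_l^{-,h}$ and $n_l^-$ with $\p_l^{+,h}$ as dictated by Theorem \ref{2023-09-06 21:25:07}, and correctly threading the translation operators $\tau_{\pm he_l}$ through each expansion so that the arguments of $\f$ and $\g$ in the volume integrand line up exactly as stated.
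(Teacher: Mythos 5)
Your proposal is correct and follows essentially the same route as the paper: apply the Divergence Principle to the product $\f\g$ and then expand $\p_l^{\mp,h}(\f\g)$ via the Leibniz rules, with the pairing $n_l^+\leftrightarrow\p_l^{-,h}$ and $n_l^-\leftrightarrow\p_l^{+,h}$. Your explicit remark that the real-valued scalar $n_l^{\pm}$ can be regrouped without producing an associator is a point the paper leaves implicit, and your derivation for the $n_l^-$ case yields the $\p_l^{+,h}$-expansions, which suggests the third displayed identity in the statement contains a typographical slip.
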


\begin{proof}
	By Theorem \ref{2023-09-06 21:25:07}, we have
	\begin{equation*}
		\int_{\p B} \f n_l^+ \g \, dS = \int_B \p_l^{-,h}(\f\g) \, dV^h,
		\qquad
		\int_{\p B} \f n_l^- \g \, dS = \int_B \p_l^{+,h}(\f\g) \, dV^h.
	\end{equation*}
	The first two identities now follow from the last two formulas in \eqref{2023-09-06 21:57:31}, and the last two identities follow from the first two formulas in \eqref{2023-09-06 21:57:31}.
\end{proof}

\section[Cauchy-Pompeiu Formula on the 8D Lattice]{The Cauchy-Pompeiu Formula and Its Inverse on the Lattice \texorpdfstring{$\Z_h^8$}{Zh8}}

In this section, our focus lies on understanding the Cauchy-Pompeiu formula and its inverse in the realm of discrete octonionic analysis.

 For this purpose, we utilize the following relation:
\[
(\e_i\e_j)\e_k=\begin{cases}
	\e_i(\e_j\e_k),& \text{when } k=0,i,j,i\oplus j,\\\\
	-\e_i(\e_j\e_k),& \text{otherwise}.
\end{cases}
\]
This equation stems from \eqref{2023-09-13 18:46:08}. Here $i\oplus j$ has the same meaning as above: for distinct $i,j\in\{1,\dots,7\}$ it denotes the unique index $m$ for which $\e_i\e_j=\pm \e_m$.
  It is noteworthy that, in general,  
	\[
(\e_i\e_j)\e_k\neq -\e_i(\e_j\e_k).
\]

    \begin{definition}
        The discrete Laplacian $\Delta^h : M(\Z^8_h, \OO)\longrightarrow M(\Z^8_h, \OO)$ is defined by \[
\Delta^h  =  \sum\limits_{i=0}^7 \p_i^h\p_i^h .
\]
    \end{definition}

    For $h=1$,  we have \[
\Delta^1 \f (x)=\frac{1}{4}\sum\limits_{i=0}^7 (\f(x+2e_i)+\f(x-2e_i)-2\f(x))
\] for any $\f\in M(\Z^8_h,\OO).$
    The fundamental solution of $\Delta^1$ is given by   \begin{equation}
        \label{2023-09-07 20:07:19}
        F^1(x)=- \frac{1}{(2\pi)^8}\int_{[-\pi,\pi]^8}  {\left(\sum\limits_{l=0}^7 \sin^2u_l\right)^{-1}} e^{i\sum\limits_{l=0}^7 u_lx_l} du,
    \end{equation}
such that
\begin{equation}
	\label{2023-09-07 09:51:28}
	\Delta^1   F^1(x)=\delta^1_0(x)
\end{equation}
where
$\delta_0^1(x)$ is the discrete Dirac delta function defined by   \[
\delta_0^1(x)=\begin{cases}
	1,\quad x=0,\\\\
	0,\quad x\neq 0.
\end{cases}
\]
 Indeed, it can be shown that  \[
\Delta^1\left( e^{i\sum\limits_{l=0}^7 u_lx_l}\right)=\left(-\sum\limits_{l=0}^7 \sin^2 u_l\right) e^{i\sum\limits_{l=0}^7 u_lx_l},
\] which leads to
    \begin{equation}
        \label{2023-09-07 10:05:31}
        \begin{aligned}
            \Delta^1 F^1(x)&= -\frac{1}{(2\pi)^8} \int_{[-\pi,\pi]^8}\left(\sum\limits_{l=0}^7 \sin^2u_l\right)^{-1}  \left(-\sum\limits_{l=0}^7\sin^2 u_l\right) e^{i\sum\limits_{l=0}^7 u_lx_l} du\\&= \frac{1}{(2\pi)^8} \int_{[-\pi,\pi]^8} e^{i\sum\limits_{l=0}^7 u_lx_l} du \\&= \delta_0^1(x).
        \end{aligned}
    \end{equation}

    \begin{definition}
        A function $\E^h\in M(\Z^8_h,\OO)$ is called the fundamental solution of $\bm D^h$ if \begin{equation}
            \label{2023-09-15 10:18:26}
            \bm D^h \E^h =\delta_0^h (x)
        \end{equation}
        in a pointwise  sense,
         where $\delta_0^h (x)$  is the discrete Dirac delta function defined by   \[
\delta_0^h (x)=\begin{cases}
            \frac{1}{h^8},\quad x=0,\\\\
            0,\quad x\neq 0
        \end{cases}.
\]
    \end{definition}
   Notably,  \eqref{2023-09-15 10:18:26} holds in a pointwise sense, leading to \begin{equation}
        \label{2023-09-15 10:07:31}
        \delta^h_0(x)=\frac{1}{h^8}\chi_{\{0\}}(x).
    \end{equation}
This contrasts with the continuous case.

    \begin{lemma}\label{2023-09-07 22:03:59}
        The fundamental solution $\E^h(x)$ of $\bm D^h$ satisfies   \[
\E^h (x)= \frac{1}{h^7}\E^1(\frac{x}{h}),
\] where  \begin{equation}
            \label{2023-09-07 10:25:17}
            \E^1(x):= - \frac{1}{(2\pi)^8}\int_{[-\pi,\pi]^8} i\sum\limits_{l=0}^7  \bm {\overline   e}_l \sin (u_l)   {\left(\sum\limits_{l=0}^7 \sin^2(u_l)\right)^{-1}} e^{i\sum\limits_{l=0}^7 u_lx_l}  du.
        \end{equation}
    \end{lemma}
    \begin{proof}
     We may assume $h=1.$ Define the discrete octonionic Cauchy--Fueter operator
\[
\bm D^1 = \sum\limits_{i=0}^7 \e_i\p_i,
\]
and its conjugate operator
\[
\bm{\overline D}^{\,1}=\e_0\p_0-\sum\limits_{i=1}^7 \e_i\p_i.
\]
A direct calculation yields
\begin{equation}
        \label{2023-09-07 10:46:23}
        \Delta^1=\bm D^1 \bm{\overline D}^{\,1}.
       \end{equation}
With the definitions of $\E^1$ and $F^1$, and noting that
\[
\bm {\overline D}^{\,1} \Big(e^{i\sum\limits_{l=0}^7 u_lx_l}\Big)= e^{i\sum\limits_{l=0}^7 u_lx_l } \sum\limits_{l=0}^7 i \overline{\e_l} \sin (u_l),
\]
we deduce
\begin{equation}
        \label{2023-09-17 09:11:52}
        \bm {\overline D}^{\,1} F^1= \E^1.
       \end{equation}
From \eqref{2023-09-07 09:51:28} and \eqref{2023-09-07 10:46:23}, it follows that
\[
\bm D^1 \E^1=\bm D^1 \bm{\overline D}^{\,1} F^1=\Delta^1 F^1=\delta_0^1.
\]
This completes the proof.
    \end{proof}

    \begin{lemma}\label{2023-09-11 19:51:11}
    The fundamental solution $\E^1$ is an element of  $L^2(\Z^8).$
\end{lemma}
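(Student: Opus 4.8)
The plan is to evaluate $\|\E^1\|_{L^2(\Z^8)}^2=\sum_{x\in\Z^8}|\E^1(x)|^2$ by recognizing $\E^1$ as the Fourier-coefficient sequence of its symbol on the torus $[-\pi,\pi]^8$, applying the Plancherel/Parseval identity, and then checking that the resulting integral converges near the zeros of $\sum_l\sin^2 u_l$.

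First, formula \eqref{2023-09-07 10:25:17} exhibits $\E^1(x)$, up to the harmless reflection $x\mapsto -x$, as the Fourier coefficients of the $(\C\otimes\OO)$-valued symbol
\[
g(u):=-\,i\sum_{l=0}^7\overline{\e}_l\sin(u_l)\Big(\sum_{l=0}^7\sin^2(u_l)\Big)^{-1}
\]
on $[-\pi,\pi]^8$. Decomposing $g$ into its real scalar components with respect to the basis $\{\e_0,\dots,\e_7\}$ and applying the classical Parseval identity for Fourier series to each component, I obtain
\[
\sum_{x\in\Z^8}|\E^1(x)|^2=\frac{1}{(2\pi)^8}\int_{[-\pi,\pi]^8}|g(u)|^2\,du .
\]

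Next I would simplify $|g(u)|^2$. The scalar $-i$ has modulus $1$, and since $\OO$ is a composition algebra with $\{\overline{\e}_0,\dots,\overline{\e}_7\}$ an orthonormal set, for real coefficients $\sin(u_l)$ one has $\big|\sum_{l}\overline{\e}_l\sin(u_l)\big|^2=\sum_{l}\sin^2(u_l)$. Thus a single power of the denominator cancels the numerator, giving
\[
|g(u)|^2=\Big(\sum_{l=0}^7\sin^2(u_l)\Big)^{-1},
\]
so the lemma reduces to showing that $\int_{[-\pi,\pi]^8}\big(\sum_l\sin^2 u_l\big)^{-1}\,du<\infty$.

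The only genuine obstacle is the behaviour of this integrand near the zero set of $\sum_l\sin^2 u_l$, namely the finite collection of points $u^\ast$ all of whose coordinates lie in $\{0,\pm\pi\}$ (where every $\sin u_l$ vanishes). Near each such $u^\ast$ I would use the elementary estimate $\sin^2(u_l)\asymp (u_l-u^\ast_l)^2$ to obtain a two-sided comparison $\sum_l\sin^2(u_l)\asymp|u-u^\ast|^2$ on a small ball, reducing the local question to the integrability of $|v|^{-2}$ near $0\in\R^8$. Since the exponent $2$ is strictly less than the dimension $8$, polar coordinates give $\int_0 r^{-2}r^{7}\,dr<\infty$, so each local contribution is finite; away from these finitely many points the integrand is bounded. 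Summing the finitely many local estimates together with the bounded remainder yields a finite integral and hence $\E^1\in L^2(\Z^8)$.
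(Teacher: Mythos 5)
Your proposal is correct and follows essentially the same route as the paper: both identify $\E^1$ via Parseval's identity with the torus symbol $i\sum_l\overline{\e}_l\sin(u_l)\left(\sum_l\sin^2 u_l\right)^{-1}$, bound its modulus by $\left(\sum_l\sin^2 u_l\right)^{-1/2}\sim |u-u^\ast|^{-1}$ near the singular points, and conclude square-integrability since the exponent is below the dimension $8$. Your write-up is somewhat more explicit (cataloguing all singular points and carrying out the polar-coordinate check), but there is no substantive difference in method.
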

\begin{proof}
    Recall
    \[
\E^1(x):= - \frac{1}{(2\pi)^8}\int_{[-\pi,\pi]^8} i\sum\limits_{l=0}^7  \bm {\overline   e}_l \sin (u_l)   {\left(\sum\limits_{l=0}^7 \sin^2(u_l)\right)^{-1}} e^{i\sum\limits_{l=0}^7 u_lx_l}  du.
\]
It implies    $\E^1$ is the Fourier transform of
    \[
i\sum\limits_{l=0}^7  \bm {\overline   e}_l \sin (u_l)   {\left(\sum\limits_{l=0}^7 \sin^2(u_l)\right)^{-1}}.
\]
    We claim that this function is dominated by \[
\frac{1}{\sqrt{\sum\limits_{l=0}^7(u_l-s_l)^2}}
\]
    near any of its singular points $(s_0,s_1,\cdots,s_7).$
      Indeed, by periodicity, we only need to consider the singular point $(0,0,0,0,0,0,0,0)$, where
   \begin{equation*}
       \left \Vert i\sum\limits_{l=0}^7  \bm {\overline   e}_l \sin (u_l)   {\left(\sum\limits_{l=0}^7 \sin^2(u_l)\right)^{-1}} \right\Vert =\frac{1}{\sqrt{\sum\limits_{l=0}^7 \sin^2u_l}}   \thicksim \frac{1}{\sqrt{\sum\limits_{l=0}^7 u_l^2}}.
    \end{equation*}
   Thus it belongs to $L^2([-\pi,\pi]^8)$ and by Parseval's identity we have $\E^1 \in L^2(\Z^8).$
\end{proof}

\begin{remark}
   The integration expression of $\E^1 $ from  \eqref{2023-09-07 10:25:17} shows  that $x=0$ is not a singular point  of $\E^1$, contrasting the continuous case where
       \[
\E(x)=\frac{ 1}{\omega_7}\frac{\bm{\bar x}}{|\bm x|^8}.
\] Here  $\omega_7=\frac{2\pi^{4}}{\Gamma(4)}$ is the surface area of the unit sphere $S^7$.
\end{remark}

In the discrete context, the boundary integral kernel of $\bm D^h$
differs from its continuous counterpart. Specifically, while the continuous version is characterized by the fundamental solution $\E$, the discrete form is described by
 \begin{equation}
        \label{2023-09-07 16:01:11}
        \bm K^h(x,y):= -\frac{1}{2}\sum\limits_{l=0}^7 \left(\E^h(he_l-x+y)n_l^-(x)+\E^h(-he_l-x+y)n_l^+(x)\right)\e_l.
    \end{equation}
Given the octonionic nature of the terms within this kernel, multiplying $\bm K^h(x,y)$ by an octonionic boundary value, say \(\f(x)\), presents challenges because of non-associativity. For our purposes, we adopt the following ordered product:
  \begin{equation}
	\label{2023-09-07 21:56:06}
	\bm  K^h(x,y)*\f(x) := -\frac{1}{2}\sum\limits_{l=0}^7 \left(\E^h(he_l-x+y)n_l^-(x)+\E^h(-he_l-x+y)n_l^+(x)\right)(\e_l \f(x))
\end{equation}
 By using this approach, we can develop a Cauchy-Pompeiu formula for
$\bm D^h$
 that mirrors its continuous counterpart in form, but compensates with unique boundary and volume integral kernels. The pairing of $n_l^{-}$ with the shifted kernel $\E^h(he_l-x+y)$ and of $n_l^{+}$ with $\E^h(-he_l-x+y)$ is forced by \Cref{2023-09-06 20:18:53}: the boundary weights are dual to the forward and backward differences, and together they match the symmetric difference $\p_l^h$ exactly.

 \begin{theorem}[Cauchy-Pompeiu formula]\label{thm:cauchy-pompeiu}\label{2023-09-11 19:15:35}
 	Let $B$ be a bounded set in $\Z^8_h.$ Then for any function $\f:\overline B\longrightarrow  \OO,$ we have \begin{equation}
 		\begin{aligned}
 			\chi_B(y)\f(y)=\int_{\p B}\bm K^h(x,y) * \f(x)dS(x)+ \int_B \E^h(y-x)(\bm D^h \f)(x)dV^h(x)  .
 		\end{aligned}
 	\end{equation}
 \end{theorem}

         \begin{proof}
Employing the discrete divergence theorem on the boundary integral, we get
\begin{equation}\label{2023-09-08 09:07:56}
\begin{aligned}
 \int_{\p B}\bm K^h(x,y)*\f(x)dS(x)=  \int_B  G^h(x,y)\,dV^h(x),
\end{aligned}
\end{equation}
where
\[
G^h(x,y)= -\frac{1}{2} \sum\limits_{l=0}^7  \Bigg( \p_l^{+,h}\Big(\E^h(he_l-\cdot+y)(\e_l \f(\cdot))\Big)(x) +\p_l^{-,h}\Big(\E^h(-he_l-\cdot+y)(\e_l\f(\cdot))\Big)(x)\Bigg).
\]

For each fixed \(l\), the Leibniz rule and the fact that \( \p_l^{\pm,h}(\e_l\f)=\e_l\p_l^{\pm,h}\f \) give
\[
\begin{aligned}
&\p_l^{+,h}\Big(\E^h(he_l-\cdot+y)(\e_l \f(\cdot))\Big)(x)
+\p_l^{-,h}\Big(\E^h(-he_l-\cdot+y)(\e_l\f(\cdot))\Big)(x)\\
&\qquad=
\E^h(-x+y)\bigl(\e_l\p_l^{+,h}\f(x)+\e_l\p_l^{-,h}\f(x)\bigr)
-\bigl((\p_l^{+,h}\E^h)(-x+y)+(\p_l^{-,h}\E^h)(-x+y)\bigr)(\e_l\f(x))\\
&\qquad=
2\,\E^h(-x+y)(\e_l\p_l^h\f(x))-2(\p_l^h\E^h)(-x+y)(\e_l\f(x)).
\end{aligned}
\]
Hence
\[
G^h(x,y)
=- \E^h(-x+y)(\bm D^h \f)(x)+ \sum\limits_{l=0}^7 (\p_l^h\E^h)(-x+y)(\e_l \f(x)).
\]
Using the definition of the associator, we rewrite the second term as
\[
\sum_{l=0}^7 \bigl((\p_l^h\E^h)(-x+y)\e_l\bigr)\f(x)
-\sum_{l=0}^7[(\p_l^h\E^h)(-x+y),\e_l,\f(x)].
\]

We first show that
\begin{equation}
\label{eq:right-fundamental}
\sum_{l=0}^7 \bigl((\p_l^h\E^h)(z)\e_l\bigr)=\delta_0^h(z).
\end{equation}
Indeed, \(\E^h=\bm{\overline D}^{\,h}F^h\) with \(F^h\) real-valued, so
\[
(\p_l^h\E^h)(z)=\sum_{t=0}^7 \bm{\overline e}_t\,(\p_l^h\p_t^hF^h)(z).
\]
Therefore
\[
\sum_{l=0}^7 \bigl((\p_l^h\E^h)(z)\e_l\bigr)
=
\sum_{l,t=0}^7 (\p_l^h\p_t^hF^h)(z)\,\bm{\overline e}_t\e_l.
\]
Since mixed differences of the real-valued function \(F^h\) commute, the off-diagonal terms cancel pairwise because
\[
\bm{\overline e}_t\e_l+\bm{\overline e}_l\e_t=0,
\qquad l\neq t.
\]
Thus only the diagonal part survives, and we obtain
\[
\sum_{l=0}^7 \bigl((\p_l^h\E^h)(z)\e_l\bigr)
=
\sum_{l=0}^7(\p_l^h\p_l^hF^h)(z)
=
\Delta^h F^h(z)
=
\delta_0^h(z).
\]

Next we show that
\[
\sum\limits_{l=0}^7[(\p_l^h\E^h)(-x+y),\e_l,\f(x)]=0.
\]
Using again \(\E^h=\bm{\overline D}^{\,h}F^h\) and the reality of \(F^h\), we get
\[
\sum_{l=0}^7[(\p_l^h\E^h)(-x+y),\e_l,\f(x)]
=
\sum_{l=0}^7\sum_{t=0}^7(\p_l^h\p_t^hF^h)(-x+y)[\bm{\overline e}_t,\e_l,\f(x)].
\]
Since \(F^h\) is real-valued, the mixed differences commute:
\[
\p_l^h\p_t^hF^h=\p_t^h\p_l^hF^h.
\]
Moreover, for all \(l,t=0,\dots,7\) and all \(a\in\OO\), alternativity gives
\begin{equation}
    \label{2023-09-13 09:30:29}
    [\bm{\overline e}_l,\e_t,a]+[\bm{\overline e}_t,\e_l,a]=0.
\end{equation}
Hence the double sum cancels pairwise and therefore vanishes.

Combining these identities, we obtain
\[
G^h(x,y)=\delta_0^h(-x+y)\f(x)- \E^h(-x+y)(\bm D^h \f)(x).
\]
Integrating over \(B\) and using \eqref{2023-09-08 09:07:56}, we arrive at
\[
\chi_B(y)\f(y)=\int_{\p B}\bm K^h(x,y) * \f(x)dS(x)+ \int_B \E^h(y-x)(\bm D^h \f)(x)dV^h(x).
\]
This proves the theorem.
\end{proof}

        \begin{remark}  Theorem    \ref{2023-09-11 19:15:35}
        can be expanded to encompass general alternative algebras. However, this is not applicable to Cayley-Dickson algebras, due to the need for the alternative property of octonions as showcased in \eqref{2023-09-13 09:30:29}.        \end{remark}

     \begin{theorem}
           [Cauchy integral theorem]  Let $B$ be a bounded set in $\Z^8_h.$ Then for any function $\f\in M(\overline B,\OO)$ that is discrete regular in $B$, we have \[
\chi_B(y)\f(y)=\int_{\p B}\bm K^h(x,y) * \f(x)dS(x).
\]
        \end{theorem}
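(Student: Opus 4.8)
The plan is to obtain this statement as an immediate corollary of the Cauchy-Pompeiu formula established in Theorem~\ref{2023-09-11 19:15:35}. First I would apply that theorem to the given function $\f\in M(\overline B,\OO)$, which produces the identity
$$\chi_B(y)\f(y)=\int_{\p B}\bm K^h(x,y) * \f(x)\,dS(x)+ \int_B \E^h(y-x)(\bm D^h \f)(x)\,dV^h(x).$$
The only remaining task is to eliminate the volume term. By the definition of the Haar-measure integral, $\int_B (\cdots)\,dV^h=\sum_{x\in B}(\cdots)(x)\,h^8$, a \emph{finite} sum because $B$ is bounded and therefore contains only finitely many lattice points. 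The hypothesis $\bm D^h\f=0$ in $B$ says precisely that $(\bm D^h\f)(x)=0$ at every $x\in B$, so each summand $\E^h(y-x)(\bm D^h\f)(x)h^8$ vanishes and the entire volume integral is zero. What survives is exactly the boundary integral, giving the desired formula.

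The one point worth checking carefully is that the integrand of the volume term is genuinely defined on all of $B$, so that the pointwise vanishing argument is legitimate. The operator $\bm D^h$ at a point $x$ involves the values of $\f$ on the neighborhood $N(x)=\{x,x\pm he_0,\dots,x\pm he_7\}$. For $x\in B$, any neighbor $x\pm he_l$ is either again in $B$, or, if it lies outside $B$, then it belongs to $\p B$: indeed such a point $z=x\pm he_l$ satisfies $x\in N(z)\cap B\neq\emptyset$ and $z\in N(z)\cap(\Z^8_h\setminus B)\neq\emptyset$, hence $z\in\p B$. In either case the neighbor lies in $\overline B=B\cup\p B$, which is exactly where $\f$ is prescribed. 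This is the role of the two-layer boundary: it guarantees that $\bm D^h\f$ is well-defined throughout $B$ from data on $\overline B$, so the hypothesis $\bm D^h\f=0$ in $B$ makes sense and the reduction goes through.

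I do not anticipate any genuine obstacle here, since all of the analytic substance — the star product, the Leibniz rules, the discrete divergence principle, and the vanishing of the associator term via the alternativity relation~\eqref{2023-09-13 09:30:29} — has already been absorbed into the proof of the Cauchy-Pompeiu formula. The present theorem is a clean specialization to the regular case, and the proof amounts to invoking Theorem~\ref{2023-09-11 19:15:35} and discarding a term that is identically zero under the regularity assumption.
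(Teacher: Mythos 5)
Your proposal is correct and matches the paper's proof, which simply invokes Theorem~\ref{2023-09-11 19:15:35} and drops the volume term; your additional check that $\bm D^h\f$ is well-defined on $B$ from data on $\overline B$ is a sensible elaboration of a point the paper leaves implicit.
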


\begin{proof}
	This follows directly from the discrete Cauchy-Pompeiu integral formula. 	
\end{proof}

The discrete Cauchy-Pompeiu integral formula prompts further exploration, particularly the introduction of the discrete Cauchy-Bitsadze operator and the discrete Teodorescu operator.

        \begin{definition}
            Let $B$ be a bounded set in the lattice $\Z^8_h$.

            1. The {\bf discrete Cauchy-Bitsadze operator}   $\bm C^h_{\p B} : M(\p B, \OO) \longrightarrow M(\Z^8_h, \OO)$ is defined by \[
\bm C^h_{\p B}\f (y) := \int_{\p B} \bm K^h(x,y)*\f (x) dS(x).
\]

            2. The  {\bf discrete Teodorescu operator}  $\bm T^ h_ B : M(B, \OO) \longrightarrow M(\Z^8_h, \OO)$ is defined by \[
\bm T^h_B \f (y):= \int_{B} \E^h(y-x)\f(x) dV^h(x).
\]
        \end{definition}

\begin{theorem}
    [Inverse of Cauchy integral theorem]\label{thm:cauchy-integral-inverse}\label{2023-09-11 17:48:54} Let $B$ be a bounded set in $\Z^8_h,$ and $\f\in M( \overline B,\OO).$ The following assertions hold:

   \begin{itemize}
    \item  The Cauchy type integral   $ \bm C^h_{\p B}\f$
      is regular both in $B^{\circ}$ and in  $(\Z^8_h\setminus B)^{\circ}.$

     \medskip

    \item   If there holds
                $ \f(y)=\bm C^h_{\p B}\f(y)$  for any $y\in   B,$
      then  $\f$ is regular in $B^\circ$.
      \end{itemize}

\end{theorem}
\begin{proof}
    Since \(B\) is finite, differentiation may be passed through the boundary sum. It therefore suffices to prove that, for every \(x\in \p B\) and every \(y\in B^{\circ}\cup (\Z^8_h\setminus B)^{\circ}\),
    \[
    \bm D_y^h\bigl(\bm K^h(x,\cdot)*\f(x)\bigr)(y)=0.
    \]
    Fix \(x\in \p B\). Because \(y\in B^{\circ}\cup (\Z^8_h\setminus B)^{\circ}\), neither \(he_l-x+y\) nor \(-he_l-x+y\) can vanish. Hence \((\bm D_y^h\E^h)(he_l-x+y)=(\bm D_y^h\E^h)(-he_l-x+y)=0\) for every \(l\).

    For a fixed lattice point \(z\neq 0\) and a fixed octonion \(c\), the constant-right-factor rule gives
    \[
    \bm D_y^h\bigl(\E^h(z+y)\,c\bigr)
    =
    (\bm D_y^h\E^h)(z+y)\,c
    -
    \sum_{j=0}^7[\e_j,(\p_j^h\E^h)(z+y),c].
    \]
    Since \((\bm D_y^h\E^h)(z+y)=0\), it remains to examine the associator sum. Writing again \(\E^h=\bm{\overline D}^{\,h}F^h\) with \(F^h\) real-valued, we obtain
    \[
    (\p_j^h\E^h)(z+y)=\sum_{t=0}^7 \bm{\overline e}_t\,(\p_j^h\p_t^hF^h)(z+y),
    \]
    and therefore
    \[
    \sum_{j=0}^7[\e_j,(\p_j^h\E^h)(z+y),c]
    =
    \sum_{j,t=0}^7 (\p_j^h\p_t^hF^h)(z+y)\,[\e_j,\bm{\overline e}_t,c].
    \]
    Since the mixed differences commute and
    \[
    [\e_j,\bm{\overline e}_t,c]+[\e_t,\bm{\overline e}_j,c]=0,
    \qquad c\in\OO,
    \]
    the double sum cancels pairwise. Thus
    \[
    \bm D_y^h\bigl(\E^h(z+y)\,c\bigr)=0
    \qquad\text{for every }z\neq 0.
    \]

    Applying this with \(c=n_l^-(x)\e_l\f(x)\) and \(z=he_l-x\), and also with \(c=n_l^+(x)\e_l\f(x)\) and \(z=-he_l-x\), we conclude that each summand in the kernel is discrete regular as a function of \(y\). Therefore \(\bm C^h_{\p B}\f\) is discrete regular on \(B^{\circ}\) and on \((\Z^8_h\setminus B)^{\circ}\).

    For the converse statement, if \(\f(y)=\bm C^h_{\p B}\f(y)\) for every \(y\in B\), then \(\f\) coincides on \(B^{\circ}\) with a discrete regular function, and hence \(\f\) is discrete regular on \(B^{\circ}\).
\end{proof}

As in the continuous setting, the Teodorescu operator $\bm T^ h_ B$ is a right inverse of the Cauchy-Fueter operator $\bm D^h$:

\begin{theorem}\label{thm:teodorescu-right-inverse}\label{2023-09-29 15:22:41}
    Let $B$ be a bounded set in $\Z^8_h$. Then for any $\f\in M(B,\OO)$, there holds
    \[
\bm D^h(\bm T^h_B \f)(y)= \chi_B(y)\f(y).
\]
\end{theorem}

\begin{proof}
    Since $B$ is finite, differentiation may be passed through the sum. Hence
    \[
    \begin{aligned}
        \bm D^h(\bm T^h_B \f)(y)
        &= \sum_{x\in B}\sum_{l=0}^7 \e_l\bigl(\p_l^h \E^h\bigr)(y-x)\f(x)\,h^8\\
        &= \int_B (\bm D_y^h \E^h)(y-x)\f(x)\,dV^h(x)
           - \int_B \sum_{l=0}^7 [\e_l,(\p_l^h\E^h)(y-x),\f(x)]\,dV^h(x).
    \end{aligned}
    \]
    The first term equals \(\chi_B(y)\f(y)\) because \(\bm D^h\E^h=\delta_0^h\). For the second term we use
    \[
    \E^h=\bm{\overline D}^{\,h}F^h,
    \qquad
    F^h(z)=\frac{1}{h^6}F^1\!\left(\frac{z}{h}\right),
    \]
    where \(F^h\) is real-valued. Therefore
    \[
    (\p_l^h\E^h)(y-x)=\sum_{t=0}^7 \bm{\overline e}_t\,(\p_l^h\p_t^hF^h)(y-x),
    \]
    and thus
    \[
    \sum_{l=0}^7 [\e_l,(\p_l^h\E^h)(y-x),\f(x)]
    =
    \sum_{l,t=0}^7 (\p_l^h\p_t^hF^h)(y-x)\,[\e_l,\bm{\overline e}_t,\f(x)].
    \]
    Since \(F^h\) is real-valued, the mixed differences commute:
    \[
    \p_l^h\p_t^hF^h=\p_t^h\p_l^hF^h.
    \]
    Moreover, alternativity implies
    \[
    [\e_l,\bm{\overline e}_t,a]+[\e_t,\bm{\overline e}_l,a]=0,
    \qquad a\in\OO.
    \]
    Hence the double sum cancels pairwise, so the associator contribution vanishes identically. Therefore
    \[
    \bm D^h(\bm T^h_B \f)(y)=\chi_B(y)\f(y).
    \]
\end{proof}

\section{Estimation of the Discrete Cauchy Kernel}

Let us consider the discrete Cauchy kernel in the lattice structure. We aim to provide a quantitative estimate for this kernel, which proves to be pivotal in the context of the discrete regular extension theory. Specifically, the relevance of this estimate can be observed in Theorems \ref{2023-09-29 17:27:26} and \ref{2023-09-29 18:34:59}.

Let $\mathcal F_{\R^8}$, $\mathcal F_{\mathbb T^8}$ be the Fourier transforms on $\R^8$ and on $\mathbb T:=[-\pi,\pi]^8$ respectively, i.e. \begin{equation*}
    \begin{aligned}
        \mathcal F_{\R^8} g (v)&=\frac{1}{(2\pi)^8}\int_{\R^8} g(u)e^{iu\cdot v} du,\\
        \mathcal F_{\mathbb T^8} g (v)&=\frac{1}{(2\pi)^8}\int_{\mathbb T^8} g(u)e^{iu\cdot v} du.
    \end{aligned}
\end{equation*}

It is known that the fundamental solution of the Dirac operator in octonions
\[
D=\sum_{l=0}^7 e_l\partial_l
\]
is given by
 \[
\E(x) = \dfrac{3}{\pi^4} \dfrac{\bm {\overline x}}{|x|^8}.
\]
For its discrete version, we know that the discrete Dirac operator
\[
D^h=\sum_{l=0}^7 e_l\partial_l^h
\]
has the fundamental solution $E^h$, given in  \eqref{2023-09-07 10:25:17}. We call it the discrete Cauchy kernel of octonions.
Without loss of generality, we may assume $h=1$.

Within the Fourier domain, we consider   the discrete and continuous fundamental solutions above, which  take the form
\begin{equation}
	\label{2023-10-02 15:12:13}
	\begin{aligned}
	 \bm{S_c}	 & = \mathcal{F}_{\R^8}^{-1} \E, \\
	\bm{S_d}	& = \mathcal{F}_{\mathbb T^8}^{-1}  \E^1.
	\end{aligned}
\end{equation}
By direct calculation, they can be expressed explicitly as
\[ \begin{split}
	\bm{S_d}(u) & = \dfrac{\sum_{l=0}^7 \overline{\e_l} \, \sin u_l}{i\sum_{l=0}^7 \sin^2 u_l}, \\
	\bm{S_c}(u) & = \dfrac{\sum_{l=0}^7 \overline{\e_l} \, u_l}{i|u|^2}.
\end{split} \]

These two symbols have very different singular sets. The continuous symbol \( \bm{S_c} \) has a single singularity at \(u=0\), whereas the periodic symbol \( \bm{S_d} \) is singular at every corner point
\[ u = \pi \sum_{l=0}^7 \alpha_l e_l, \qquad \alpha_l \in \{0,\pm 1\}. \]

We will express the discrete Cauchy kernel in terms of the continuous one together with the following auxiliary functions.

1. Let \( \phi:\mathbb R^8\to \mathbb R \) be a smooth cutoff function such that
\[ \phi(u) = \begin{cases}
	1, & \text{if } u \in \left[-\frac{\pi}{8},\frac{\pi}{8}\right]^8, \\
	0, & \text{if } u \notin \left[-\frac{\pi}{4},\frac{\pi}{4}\right]^8.
\end{cases} \]

2. Define \( \Phi:\mathbb R^8\to \OO \) by
  \[
\Phi(u) = (\phi(u) - 1) \bm{S_c}(u).
\]

3. We consider an error function
\[ \Theta(u) = \bm{S_d}(u) - \sum_{\substack{\alpha_l \in \{0, \pm 1\}, \\ 0 \leq l \leq 7}} \phi \left( u + \pi \sum_{l=0}^7 \alpha_l e_l \right) \bm{S_c} \left( u + \pi \sum_{l=0}^7 \alpha_l e_l \right) \]

4. Introducing a bounded weight function:
\[ \omega(x) = \prod_{l=0}^7 \left( 1 + 2\cos(\pi x_l) \right) \]

Now we can provide close relation between the discrete and continuous Cauchy kernels.

\begin{theorem}\label{2023-09-30 10:43:11}
  The fundamental solution $\bm E^1$ admits the following expansion: \begin{equation}
        \label{2023-09-30 11:08:22}
        \bm E^1(x)= \omega(x) \bm E(x) + \frac{1}{|x|^8} \omega(x) \mathcal F_{\R^8} (\Delta^4 \Phi)(x) + \frac{1}{|x|^8} \mathcal F _{\mathbb T^8} (\Delta^4 \Theta) (x)
    \end{equation} for any $x\in   \Z^8\setminus \{0\},$   In particular, \[
\bm E^1(x) = \omega(x) \bm E(x) +O(|x|^{-8})
\]
 as  $|x|\rightarrow +\infty$.
\end{theorem}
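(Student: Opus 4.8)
The plan is to work on the Fourier side, using the two representations $\E^1=\mathcal F_{\mathbb T^8}\bm{S_d}$ and $\E=\mathcal F_{\R^8}\bm{S_c}$ together with the decomposition of $\bm{S_d}$ built into the definition of $\Theta$, and then to translate the frequency identity back to $x$. First I would read the definition of $\Theta$ as the exact splitting
\[
\bm{S_d}(u)=\Theta(u)+\sum_{\substack{\alpha_l\in\{0,\pm1\}\\0\le l\le7}}\phi\Big(u+\pi\sum_{l=0}^7\alpha_l e_l\Big)\,\bm{S_c}\Big(u+\pi\sum_{l=0}^7\alpha_l e_l\Big),
\]
and apply $\mathcal F_{\mathbb T^8}$ termwise, which is legitimate because the sum is finite. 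The $\Theta$-summand produces $\mathcal F_{\mathbb T^8}\Theta(x)$ directly. For each shifted summand I change variables $w=u+\pi\sum_l\alpha_l e_l$; since $x\in\Z^8$ and $\sum_l\alpha_l e_l\in\Z^8$, the shift contributes the phase $e^{-i\pi x\cdot\sum_l\alpha_l e_l}$, and since $\phi\bm{S_c}$ is supported in the interior $[-\tfrac\pi4,\tfrac\pi4]^8$ of the fundamental domain its torus transform agrees with the full-space transform $\mathcal F_{\R^8}(\phi\bm{S_c})(x)$. Summing the phases factorizes,
\[
\sum_{\substack{\alpha_l\in\{0,\pm1\}\\0\le l\le7}}e^{-i\pi x\cdot\sum_l\alpha_l e_l}=\prod_{l=0}^7\big(1+2\cos(\pi x_l)\big)=\omega(x),
\]
so the entire singular part collapses to $\omega(x)\,\mathcal F_{\R^8}(\phi\bm{S_c})(x)$. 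Writing $\phi\bm{S_c}=\bm{S_c}+\Phi$ and using $\mathcal F_{\R^8}\bm{S_c}=\E$ gives $\E^1(x)=\omega(x)\E(x)+\omega(x)\mathcal F_{\R^8}\Phi(x)+\mathcal F_{\mathbb T^8}\Theta(x)$.

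To obtain the stated form I insert the Laplacian bookkeeping $\mathcal F(\Delta^4 g)(x)=|x|^8\mathcal F(g)(x)$, valid after four integrations by parts in $u$ (no boundary contributions on $\R^8$ by decay, and on the torus by periodicity of $\Theta$). Thus for $x\neq0$ one replaces $\mathcal F_{\R^8}\Phi(x)=|x|^{-8}\mathcal F_{\R^8}(\Delta^4\Phi)(x)$, and likewise for $\Theta$, which is precisely the exact identity \eqref{2023-09-30 11:08:22} on $\Z^8\setminus\{0\}$.

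The asymptotic statement then reduces to the uniform boundedness in $x$ of $\mathcal F_{\R^8}(\Delta^4\Phi)$ and $\mathcal F_{\mathbb T^8}(\Delta^4\Theta)$, since $\omega$ is bounded. The $\Phi$-term is routine: $\Phi=(\phi-1)\bm{S_c}$ vanishes identically near the origin, is smooth elsewhere, and decays like $-\bm{S_c}\sim|u|^{-1}$ at infinity, so $\Delta^4\Phi\in L^1(\R^8)$ and its transform lies in $L^\infty$ by Riemann–Lebesgue.

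The essential difficulty — and the reason Thom\'ee's single-point technique does not transfer — is the $\Theta$-term, where all the singular points $u=\pi\sum_l\alpha_l e_l$ must be treated simultaneously. Here I would carry out a local analysis at each singular point $c$ with $c_l\in\{0,\pm\pi\}$: writing $u=c+w$ and using $\sin(u_l)=\cos(c_l)\sin(w_l)$ and $\sin^2 u_l=\sin^2 w_l$, one compares the local form of $\bm{S_d}$ with the subtracted $\phi\bm{S_c}$ and must control the order of vanishing of the residual, ultimately showing $\Delta^4\Theta\in L^1(\mathbb T^8)$ so that its Fourier coefficients stay bounded. I expect this to be the main obstacle, because at the origin the residual cancels to high order, whereas at the remaining points the leading discrepancy is an \emph{odd} homogeneous term; securing the claimed decay requires exploiting the parity of these residuals together with the oscillatory phase factors already packaged in $\omega$, so that the surviving singular contributions reassemble into $\omega(x)\E(x)$ and the genuine remainder is of lower order. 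This $\Z_2$-symmetric accounting over all singular points is the crux of the argument.
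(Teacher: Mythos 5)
Your route is structurally the same as the paper's: you split $\bm S_d$ exactly as in the definition of $\Theta$, collapse the shifted sum into $\omega(x)\,\mathcal F_{\R^8}(\phi\bm S_c)(x)$ via the phase sum $\sum_{\alpha_l}e^{-i\pi x\cdot\sum\alpha_le_l}=\prod(1+2\cos\pi x_l)$, and then use the Laplacian/$|x|^{-8}$ bookkeeping on both remainders. The only genuine divergence is in the second step: the paper applies the Shivakumar--Wong expansion theorem to the \emph{compactly supported} function $\phi\bm S_c=\bm S_c+\Phi$, which packages precisely the issue you gloss over. Your claim of ``no boundary contributions on $\R^8$ by decay'' is not literally correct: $\Delta^j\Phi\sim|u|^{-1-2j}$ at infinity, which fails to be integrable on $\R^8$ for $j\le 3$, so the four integrations by parts must be justified either distributionally ($|x|^8\widehat{\Phi}=\widehat{\Delta^4\Phi}$ with the right-hand side continuous away from $0$) or via the uniform convergence of the oscillatory integrals, which is exactly hypothesis (3) of the cited theorem. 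This is repairable and does not change the conclusion.

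The real gap is the one you flag but do not close: boundedness of $\mathcal F_{\mathbb T^8}(\Delta^4\Theta)$, i.e.\ $\Delta^4\Theta\in L^1(\mathbb T^8)$. Your own local expansion shows why this is serious. At a singular point $c=\pi\sum\alpha_le_l$ one has $\sin(c_l+w_l)=\cos(c_l)\sin(w_l)$ and $\sin^2(c_l+w_l)=\sin^2(w_l)$, so the leading singular part of $\bm S_d(c+w)$ is $\sum_l\overline{\e_l}\cos(c_l)w_l/(i|w|^2)$, whereas the term subtracted at $c$ is $\phi(w)\bm S_c(w)$ with leading part $\sum_l\overline{\e_l}\,w_l/(i|w|^2)$. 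Whenever some $c_l=\pm\pi$ these do not cancel, and $\Theta$ retains a homogeneous degree $-1$ singularity at $c$; then $\Delta^4\Theta\sim|w|^{-9}$ near $c$, which is not locally integrable in dimension $8$. The rescue you propose --- exploiting parity of the residuals ``together with the oscillatory phase factors already packaged in $\omega$'' --- cannot work as stated, because those phases arise from the subtracted sum, not from $\Theta$: the term $\mathcal F_{\mathbb T^8}(\Delta^4\Theta)$ must be bounded on its own for \eqref{2023-09-30 11:08:22} to deliver the $O(|x|^{-8})$ remainder. What is actually needed is either a matched subtraction at each nonzero singular point (replacing $\bm S_c(w)$ by $\bm S_c(\epsilon w)$ with $\epsilon_l=\cos c_l$, and then redoing the phase computation) or a direct proof that the residuals are harmless; neither is supplied. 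For what it is worth, the paper itself only \emph{asserts} that all derivatives of $\Theta$ up to order eight lie in $L^1([-\pi,\pi]^8)$ without verification, so your proposal stops at exactly the point the published proof also leaves open --- but as a proof it is incomplete there.
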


The proof of  \Cref{2023-09-30 10:43:11} relies heavily on   the remarkable  work by Shivakumar and Wong
  \cite{SW1979}
  concerning the asymptotic
  expansion of Fourier transforms.
   We denote \[
L_{q}= 2^{q+n/2}  \Gamma\left(\dfrac{q+n}{2}\right) \left(\Gamma\left(\dfrac{-q}{2}\right)\right)^{-1}.
\]

\begin{theorem} [\cite{SW1979}] \label{2023-09-30 17:52:36}
    Let function f $\in C^\infty(\R^n \setminus\{0\})$ admit an expansion of a finite sum
    \[
f(u)=\sum\limits_{p,q} c_{p,q} u^p |u|^q + \Phi(u),
\] where  \[
q+n>0
\] for all the real numbers q under the summation, $p =(p_1,\cdots,p_n)$ is a multi-index of non-negative integers,
    $u^p=u_1^{p_1}\cdots u_n^{p_n}$,
     and $\Phi \in C^\infty(\R^n).$  Then there holds \[
\mathcal F_{\R^n}   f(v)= \sum\limits_{p,q} c_{p,q}L_{q}
    \left (\frac{1}{i} \frac{\partial}{\partial v_1} \right)^{p_1} \cdots \left(\frac{1}{i} \frac{\partial}{\partial v_n} \right)^{p_n}
      (|v|^{-q-n}) + \dfrac{(-1)^m}{|v|^{2m}} \mathcal F_{\R^n} (\Delta ^m \Phi)(v)
\] provided the following assumptions hold true:

    (1) We assume \[
2m-1\leqslant Q +n \leqslant 2m +1
\] where \[
Q = \mathop{\max}\limits_{p,q} (q +|p|)
\] with the maximum taken over all the multi-indices $p$ and the real number $q$ under the summation.

    (2) For any $j = 0, 1,...,m,$   assume that, as $u \rightarrow 0,$ \[
\Delta^j \Phi(u)= \begin{cases}
        O(|u|^{Q-2j+2}),& Q+n=2m-1,\\\\
        O(|u|^{Q-2j+1}),&Q+n\neq 2m-1.
    \end{cases}
\]

    (3) There exists a constant $\rho>0$ such that, for any $j = 0, 1,...,m,$ the limit
        \[
\lim_{R\rightarrow +\infty} \int_{\rho < |u| < R} \Delta^j f(u) e^{iu\cdot v} d u
\]
    converges uniformly with respect to $v$ for sufficiently large $|v|$.
\end{theorem}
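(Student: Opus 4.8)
The plan is to reduce the whole statement, by linearity of $\mathcal F_{\R^n}$, to two essentially independent computations applied to the two halves of the given decomposition $f(u)=\sum_{p,q}c_{p,q}u^p|u|^q+\Phi(u)$: the distributional Fourier transform of the homogeneous pieces $u^p|u|^q$, and the regularized transform of the smooth remainder $\Phi$. Once each summand is matched to the corresponding term on the right-hand side, summing finishes the proof. The hypotheses (1)--(3) enter only to license the analytic manipulations and the interchange of limits.

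For the homogeneous pieces I would first invoke the classical identity $\mathcal F_{\R^n}(|u|^q)(v)=L_q|v|^{-q-n}$, valid as a tempered distribution, with $L_q=2^{q+n/2}\Gamma(\tfrac{q+n}{2})(\Gamma(\tfrac{-q}{2}))^{-1}$. This is obtained by analytic continuation in the parameter $q$: for $-n<\operatorname{Re}q<0$ the radial integral converges and the constant $L_q$ falls out of a standard Gamma-function evaluation, and the hypothesis $q+n>0$ keeps $|u|^q$ locally integrable so that the continuation remains valid at the relevant real values (with $L_q$ finite throughout, vanishing exactly at the even non-negative integers $q$ where its term degenerates). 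The monomial prefactor is then absorbed through the elementary rule that, under the convention $\mathcal F_{\R^n}g(v)=(2\pi)^{-n}\int g(u)e^{iu\cdot v}du$, multiplication by $u_j$ corresponds to $\tfrac1i\partial_{v_j}$ on the transform side, since $u_je^{iu\cdot v}=\tfrac1i\partial_{v_j}e^{iu\cdot v}$. Combining the two gives $\mathcal F_{\R^n}(u^p|u|^q)(v)=L_q\prod_{j}(\tfrac1i\partial_{v_j})^{p_j}(|v|^{-q-n})$, which is precisely the summand appearing in the theorem.

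For the smooth remainder I would integrate by parts $m$ times, using $\Delta_u e^{iu\cdot v}=-|v|^2e^{iu\cdot v}$ to obtain $\mathcal F_{\R^n}(\Delta^m\Phi)(v)=(-1)^m|v|^{2m}\mathcal F_{\R^n}(\Phi)(v)$, hence $\mathcal F_{\R^n}(\Phi)(v)=\tfrac{(-1)^m}{|v|^{2m}}\mathcal F_{\R^n}(\Delta^m\Phi)(v)$ for $v\neq0$, which is exactly the remainder term. Here condition (1) fixes $m$ in relation to the top homogeneity degree $Q$ of the singular part, so that $\Delta^m$ is applied exactly the right number of times; condition (2), being a statement about $\Delta^j\Phi$ as $u\to0$, guarantees that each integration by parts performed on $\R^n\setminus\{0\}$ produces no boundary contribution from small spheres about the origin and that every intermediate $\Delta^j\Phi$ remains locally integrable there.

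The step I expect to be the genuine obstacle is the honest justification of these formal identities, i.e. making sense of $\mathcal F_{\R^n}f$ when $f$ is not integrable and controlling the oscillatory behaviour at infinity. Because the singular monomials and $\Phi$ need only grow polynomially, the transform must be read as a regularized distributional limit, and the crux is to show that the truncated integrals $\int_{\rho<|u|<R}\Delta^j f(u)\,e^{iu\cdot v}du$ converge uniformly in $v$ for large $|v|$ --- this is exactly hypothesis (3), which supplies the non-stationary-phase (Abel-type) cancellation needed to pass $R\to\infty$ and to interchange that limit with the algebraic identities of the previous paragraphs. The remaining delicate point is the boundary-value evaluation of $\mathcal F_{\R^n}(|u|^q)$ at those $q$ where $L_q$ or $|v|^{-q-n}$ threatens to degenerate, where one must check that poles of the Gamma factors cancel and the identity persists; the hypothesis $q+n>0$ is what keeps this regime accessible.
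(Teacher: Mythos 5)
First, a structural note: the paper does not prove this theorem at all --- it is imported verbatim from Shivakumar--Wong \cite{SW1979} as a black box, so your proposal can only be measured against the argument of that reference, which proceeds quite differently (Green's identity applied to $f$ itself on annuli $\epsilon<|u|<R$, with the inner boundary terms evaluated exactly by analytic continuation in $q$).

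Your opening move --- ``reduce by linearity to two independent computations'' --- fails in any pointwise sense, and this is a genuine gap, not a technicality. The hypotheses allow, for instance, $q=0$, $p=0$ (since $q+n>0$), for which the spherically truncated transform $\lim_{R\to\infty}\int_{|u|<R}e^{iu\cdot v}\,du$ oscillates with amplitude growing like $R^{(n-1)/2}$; more generally the truncated transform of $u^p|u|^q$ diverges whenever $q+|p|+(n-1)/2\geqslant 0$, a range that includes the very application made in this paper ($n=8$, $q=-2$, $|p|=1$). So the individual pieces of your decomposition need not possess Fourier transforms as improper integrals even when $f$ does; for the same reason your derivation of the remainder term is backwards: you integrate by parts starting from $\mathcal F_{\R^n}(\Phi)$, but that object need not exist pointwise --- it is only after applying $\Delta^m$ that the growth is tamed (this is precisely what condition (1) calibrates: $\Delta^m$ lowers the homogeneity of the singular terms to degree $\leqslant 1-n$, making the exterior oscillatory integrals converge), which is why the theorem's formula features $\mathcal F_{\R^n}(\Delta^m\Phi)$ and not $\mathcal F_{\R^n}(\Phi)$. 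One can rescue linearity by working entirely in $\mathcal S'$, where your two computations are classical; but then the whole content of the theorem becomes the step you explicitly defer --- identifying the distributional transform, for large $|v|$ and modulo distributions supported at $v=0$, with the improper integrals of condition (3), both for $f$ and for $\Delta^m\Phi$. That deferred step \emph{is} the theorem; it is not a license granted by the hypotheses but the work they exist to enable.

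Second, your stated use of condition (2) is wrong. Since $\Phi\in C^\infty(\R^n)$, integrating $\Delta^m$ by parts against $\Phi$ produces no boundary contributions at small spheres about the origin (there is nothing to excise), and every $\Delta^j\Phi$ is automatically continuous, hence locally integrable; even if one does excise the origin, the boundary terms die from $\epsilon^{n-1}\to 0$ and boundedness alone, with no need for the rates in (2). Condition (2) is a genuine vanishing requirement at the origin --- for small $j$ it demands strictly more than smoothness --- and its actual role is in the annulus argument of \cite{SW1979}: when Green's identity is applied $m$ times to the \emph{singular} function $f$ on $\epsilon<|u|<R$, the boundary integrals over $|u|=\epsilon$ involve $\Delta^j f$ and its normal derivatives; the monomials' contributions to these inner terms are what generate the $c_{p,q}L_q$-terms in the limit $\epsilon\to 0$, while condition (2), calibrated against $m$ by condition (1), is what makes the $\Phi$-contributions to those same boundary terms vanish. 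Your sketch has no mechanism producing the $L_q$-terms and the remainder from a single limiting process, which is what the exact identity requires.
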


\begin{proof}[Proof of \Cref{2023-09-30 10:43:11}] The expansion  \eqref{2023-09-30 11:08:22} can be derived by the following two claims:

    \textbf{claim 1}: \[
\bm E^1(x)= \omega(x) \mathcal F_{\R^8} (\phi \bm S_c) (x)+ \dfrac{1}{|x|^8} \mathcal F_{\mathbb T^8} (\Delta^4 \Theta ) (x),
\]

    \textbf{claim 2}: \[
\mathcal F_{\R^8} (\phi \bm S_c)(x) = \bm E(x) + \dfrac{1}{|x|^8} \mathcal F_{\R^8} (\Delta^4\Phi) (x).
\]

    We prove claim 1 first. By \eqref{2023-10-02 15:12:13}, $\E^1 = \mathcal F_{\mathbb T^8} \bm S_d,$ which means \[
\E^1= \mathcal F_{\mathbb T^8} \left(  \sum\limits_{\substack{\alpha_l=0,\pm 1,\\ 0\leqslant l\leqslant 7}} \phi(u+\pi\sum\limits_{l=0}^7 \alpha_l e_l) \bm S_c (u+\pi\sum\limits_{l=0}^7 \alpha_l e_l)   \right) + \mathcal F_{\mathbb T^8} \Theta.
\] A direct calculation shows that    the first summand on the right side equals $\omega \mathcal F_{\R^8} (\phi \bm S_c). $ On the other hand, since all the derivatives of $\Theta$ up to the eighth order belong to $L^1([-\pi,\pi]^8)$, we have \[
\mathcal F_{\mathbb T^8} \Theta = \frac{1}{|x|^8} \mathcal F_{\mathbb T^8 } (\Delta ^4 \Theta).
\] Combining the results above yields \textbf{claim 1}.

    Now we prove \textbf{claim 2}.  In \Cref{2023-09-30 17:52:36}, we take \[
\bm \Phi= (\phi-1)\bm S_c,
\]  and \[
\f(u)= (\phi \bm S_c)(u)= \dfrac{\sum_{l=0}^7\overline\e_l\, u_l}{i|u|^2} + \bm \Phi(u),
\] also take the parameters
    \[
n=8,  \quad m=4, \quad Q= -1, \quad |p|=1, \quad q=-2.
\] Then we obtain \[
\begin{aligned}
        \mathcal F_{\R^8}(\phi\bm S_c)(x)&= \frac{1}{2\pi^4i} \left( \dfrac{1}{i}\sum\limits_{l=0}^7 \overline \e_l\dfrac{\p}{\p x_l} \right) |x|^{-6} + \frac{1}{|x|^8} \mathcal F_{\R^8} (\Delta^4 \bm  \Phi) (x)\\&=\dfrac{3}{\pi^4}\dfrac{\bm {\overline  x}}{|x|^8} + \frac{1}{|x|^8} \mathcal F_{\R^8} (\Delta^4  \bm \Phi) (x)\\&=\E(x) +  \frac{1}{|x|^8} \mathcal F_{\R^8} (\Delta^4  \bm \Phi) (x).
    \end{aligned}
\]
    This finishes the proof.
\end{proof}

\Cref{2023-09-30 10:43:11} gives an estimate for  the discrete fundamental solution $\E^1$. For a more standard estimate for this function, we stipulate that $\bm E(0):=0$ to get the following result.

\begin{theorem}\label{2023-09-30 19:16:04}
    For any $h > 0$ and $x \in \Z^8_h$, we have \begin{equation}
        |\E^h(x)-\omega(h^{-1}x)\E(x)|\leqslant \dfrac{Ch}{|x|^8+h^8}.
    \end{equation}
\end{theorem}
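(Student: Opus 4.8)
The plan is to reduce everything to the base scale $h=1$ and then feed in the expansion of \Cref{2023-09-30 10:43:11}. Writing $y=h^{-1}x\in\Z^8$ and combining the scaling identity $\E^h(x)=h^{-7}\E^1(h^{-1}x)$ from \Cref{2023-09-07 22:03:59} with the homogeneity $\E(h^{-1}x)=h^{7}\E(x)$ (valid since $\E$ is a constant multiple of $\bm{\overline x}/|x|^8$, hence homogeneous of degree $-7$), one gets the exact identity
$$\E^h(x)-\omega(h^{-1}x)\E(x)=\frac{1}{h^7}\Big(\E^1(y)-\omega(y)\E(y)\Big),\qquad y=h^{-1}x.$$
Thus it suffices to estimate the base-scale difference $\E^1(y)-\omega(y)\E(y)$ and then track the scaling factors.

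For $y\in\Z^8\setminus\{0\}$, \Cref{2023-09-30 10:43:11} expresses this difference as
$$\E^1(y)-\omega(y)\E(y)=\frac{1}{|y|^8}\Big(\omega(y)\,\mathcal F_{\R^8}(\Delta^4\Phi)(y)+\mathcal F_{\mathbb T^8}(\Delta^4\Theta)(y)\Big),$$
so the next step is to show the bracketed quantity is uniformly bounded. The weight $\omega(y)=\prod_{l=0}^7(1+2\cos(\pi y_l))$ obeys $|\omega|\le 3^8$. For the two Fourier transforms I would invoke the elementary bound $\|\mathcal F g\|_\infty\le (2\pi)^{-8}\|g\|_{L^1}$: the function $\Delta^4\Theta$ lies in $L^1([-\pi,\pi]^8)$ (this is exactly the integrability of the eighth-order derivatives of $\Theta$ already used in the proof of \Cref{2023-09-30 10:43:11}), while $\Delta^4\Phi\in L^1(\R^8)$ because $\Phi=(\phi-1)\bm{S_c}$ is smooth (its origin singularity is removed by $\phi-1\equiv0$ near $0$) and coincides with $-\bm{S_c}$, homogeneous of degree $-1$, for large $|u|$, so that $\Delta^4\Phi$ is homogeneous of degree $-9$ at infinity and hence integrable on $\R^8$. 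This yields a constant $C_0$ with $|\E^1(y)-\omega(y)\E(y)|\le C_0|y|^{-8}$ for all $y\neq0$; rescaling through the identity above then gives
$$|\E^h(x)-\omega(h^{-1}x)\E(x)|=\frac{1}{h^7}\big|\E^1(y)-\omega(y)\E(y)\big|\le\frac{C_0 h}{|x|^8},\qquad x\in\Z^8_h\setminus\{0\}.$$

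Finally I would upgrade the denominator from $|x|^8$ to $|x|^8+h^8$ and dispose of the origin. Every nonzero point of $\Z^8_h$ satisfies $|x|\ge h$, so $|x|^8+h^8\le 2|x|^8$, and the previous bound gives $|\E^h(x)-\omega(h^{-1}x)\E(x)|\le 2C_0 h/(|x|^8+h^8)$. At $x=0$ the expansion of \Cref{2023-09-30 10:43:11} is unavailable, but there $\omega(0)\E(0)=0$ (recall $\E(0):=0$), while $\E^1(0)$ is finite since, as observed after \Cref{2023-09-11 19:51:11}, $x=0$ is not a singular point of $\E^1$; hence $|\E^h(0)|=h^{-7}|\E^1(0)|=|\E^1(0)|\,h/(|0|^8+h^8)$. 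Taking $C=\max\{2C_0,|\E^1(0)|\}$ covers every $x\in\Z^8_h$. The main obstacle is not the scaling bookkeeping but establishing the uniform boundedness of the two Fourier transforms, i.e. the $L^1$ integrability of $\Delta^4\Phi$ on $\R^8$ and of $\Delta^4\Theta$ on $[-\pi,\pi]^8$; this rests on the construction of the cutoff $\phi$ ensuring that $\Phi$ and $\Theta$ are smooth enough and decay sufficiently after four Laplacians, and everything else follows directly from \Cref{2023-09-30 10:43:11} and the scaling relations.
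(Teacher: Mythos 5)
Your proposal is correct and follows essentially the same route as the paper: both reduce to the base scale via $\E^h(x)=h^{-7}\E^1(h^{-1}x)$ and the degree $-7$ homogeneity of $\E$, bound the two Fourier-transform remainders in Theorem \ref{2023-09-30 10:43:11} by the $L^1$ norms of $\Delta^4\Phi$ and $\Delta^4\Theta$, treat $x=0$ separately using the finiteness of $\E^1(0)$, and convert $|x|^8$ to $|x|^8+h^8$ using $|x|\geqslant h$ on the nonzero lattice. The only difference is cosmetic (you rescale first and estimate second, and you spell out the integrability of $\Delta^4\Phi$ at infinity, which the paper leaves implicit).
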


\begin{proof}
    First, we take $h = 1.$ Then by \Cref{2023-09-30 10:43:11}, we have \[
|\E^1(x)-\omega(x)\E(x)|\leqslant \dfrac{C_0}{|x|^8}, \quad x\in \Z^8\setminus \{0\}
\] where \[
C_0=\frac{1}{\pi^8}\int_{\R^8} |\Delta^4 \Phi(u)| du + \frac{1}{(2\pi)^8} \int_{[-\pi,\pi]^8} |\Delta^4 \Theta(u)| du.
\]

    Furthermore, \[
|\E^1(0)-\omega(0)\E(0)|=|\E^1(0)|\neq +\infty.
\]
    Thus \begin{equation}
        \label{2023-09-30 20:07:46}
        |\E^1(x)-\omega(x)\E(x)|\leqslant \dfrac{C}{|x|^8+1}
    \end{equation}  with $C=\max\{ |\E^1(0),2C_0\}.$

    By the definitions of the discrete and continuous fundamental solutions, we have \begin{equation*}
            \E^h(x)=h^{-7}\E^1(h^{-1}x), \quad x\in \Z^8_h,
    \end{equation*}
    and \begin{equation*}
            \E(x)=h^{-7}\E(h^{-1}x), \quad x\in \R^8.
    \end{equation*}
    Therefore, \[
|\E^h(x)-\omega(h^{-1}x)\E(x)|=h^{-7}|\E^1(x^\prime)-\omega(x^\prime) \E(x^\prime)|
\] with $x^\prime=h^{-1}x. $ Hence \eqref{2023-09-30 20:07:46} yields that \[
|\E^h(x)-\omega(h^{-1}x)\E(x)|\leqslant \dfrac{Ch^{-7}}{|x^\prime|^8+1}= \dfrac{Ch }{|x|^8+h^8}.
\]
\end{proof}

\section{Sokhotski-Plemelj formula}

This section delves into the Sokhotski-Plemelj formula, spotlighting operators like the discrete Cauchy-Bitsadze and Teodorescu. These operators, tied to the lattice's boundaries, interlink functions on boundaries with those spanning the lattice. Key properties, like the discrete Cauchy-Pompeiu formula, enable function decomposition. Furthermore, the Sokhotski-Plemelj theorem and the discrete Plemelj projections underscore the significance of boundary values in discerning function behaviors, solidifying our understanding of functions on lattices.

	In parallel with the continuous context, the Teodorescu operator \(\bm T ^h_ B\) serves as a right inverse to the Cauchy-Fueter operator \(\bm D^h\). Formally, for any \(\f \in M(B, \mathbb O)\),
	\[
	\bm D^h \bm T^h_B \f = \chi_B\f.
	\]
	
	Leveraging these notations, the discrete Cauchy-Pompeiu formula (refer to \Cref{2023-09-11 19:15:35}) can be articulated as follows:
	\begin{equation}
		\label{2023-09-29 10:06:18}
		\bm C^h_{\p B} \f + \bm T^h_B (\bm D^h \f ) = \chi_B \f.
	\end{equation}

	\begin{definition}
		Given a bounded domain \(B\) in the lattice \(\Z^8_h\), we define an integral operator \(\bm S^h_{\p B}   : M(\p  B, \OO) \to M(\p B, \OO)\) by
		\[
		\bm S^h_{\p B} \f (y) = 2\int_{\p B} \bm K^h (x,y) \ast (\f(x)-\f(y)) \, dS(x) + \f(y).
		\]
	\end{definition}
	
	\begin{notation}
		We represent the inner and outer boundaries of \(B\) as \(\p^+ B\) and \(\p^- B\) respectively, where
		\[
		\p^+ B = \p B \cap B \quad \text{and} \quad \p^- B = \p B \setminus B.
		\]
	\end{notation}

	\begin{theorem}[Discrete Sokhotski-Plemelj formula]\label{2023-09-29 10:56:55}
		Given a bounded set \(B\) in \(\Z^8_h\) and any function \(\f \in M(\p B, \OO)\), the following relation holds:
		\[
		\bm C^h_{\p B} \f (y) = \frac{1}{2}\left( \pm \f(y) + \bm S^h_{\p B} \f(y) \right) \quad \forall y \in \p^{\pm} B.
		\]
	\end{theorem}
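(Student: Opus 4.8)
The plan is to reduce the statement to the discrete Cauchy–Pompeiu formula (\Cref{2023-09-11 19:15:35}) by exploiting the $\R$-linearity of the star product in its right-hand factor. First I would expand the definition of $\bm S^h_{\p B}$. Since the star product distributes over addition in its second slot, as is immediate from the defining formula \eqref{2023-09-07 21:56:06} (it acts only on the single octonionic factor $\f$, via $\e_l\f$, and is additive there), we have $\bm K^h(x,y)\ast(\f(x)-\f(y)) = \bm K^h(x,y)\ast\f(x) - \bm K^h(x,y)\ast\f(y)$, and hence
\[
\bm S^h_{\p B}\f(y) = 2\,\bm C^h_{\p B}\f(y) - 2\int_{\p B}\bm K^h(x,y)\ast\f(y)\,dS(x) + \f(y),
\]
where in the middle integral the value $\f(y)$ is treated as a fixed octonion (constant in the integration variable $x$).

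The crucial observation is that this middle integral is exactly the discrete Cauchy integral of the constant function $c(x):=\f(y)$, evaluated at $y$. I would apply the Cauchy–Pompeiu formula to this constant function: it is defined on all of $\overline B$ and satisfies $\bm D^h c = 0$, so its volume term vanishes, giving
\[
\int_{\p B}\bm K^h(x,y)\ast\f(y)\,dS(x) = \bm C^h_{\p B} c\,(y) = \chi_B(y)\,\f(y).
\]
Substituting this back yields the single master identity
\[
\bm S^h_{\p B}\f(y) = 2\,\bm C^h_{\p B}\f(y) - 2\chi_B(y)\,\f(y) + \f(y),
\]
valid for every $y\in\p B$.

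The final step is to split according to the two pieces of the boundary. On $\p^+B = \p B\cap B$ one has $\chi_B(y)=1$, so the master identity reads $\bm S^h_{\p B}\f(y) = 2\,\bm C^h_{\p B}\f(y) - \f(y)$, which rearranges to $\bm C^h_{\p B}\f(y) = \tfrac12\bigl(\f(y)+\bm S^h_{\p B}\f(y)\bigr)$. On $\p^-B = \p B\setminus B$ one has $\chi_B(y)=0$, so $\bm S^h_{\p B}\f(y) = 2\,\bm C^h_{\p B}\f(y)+\f(y)$, giving $\bm C^h_{\p B}\f(y) = \tfrac12\bigl(-\f(y)+\bm S^h_{\p B}\f(y)\bigr)$. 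Together these are precisely the claimed $\pm$ formula.

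I expect the only point requiring care—rather than a genuine obstacle—to be the justification that the middle integral equals $\chi_B(y)\f(y)$ even though $y$ itself lies on $\p B$. This is exactly where the discrete framework is cleaner than the continuous one: the discrete Cauchy–Pompeiu formula holds \emph{pointwise} for every $y\in\Z^8_h$ (through the factor $\chi_B(y)$), not merely for interior points, so no non-tangential limit or classical jump argument is needed and the boundary behaviour is encoded directly by $\chi_B$. A secondary bookkeeping point is to confirm that the decomposition $\f(x)-\f(y)$ under the integral, and hence the additivity used in the first step, is legitimate despite non-associativity; this is clear since no reassociation of octonionic products is involved, only additivity in the factor acted on by the star product.
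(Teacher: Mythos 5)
Your proposal is correct and follows essentially the same route as the paper: expand $\bm S^h_{\p B}$ by additivity of the star product, evaluate the resulting constant-function Cauchy integral via the discrete Cauchy--Pompeiu formula to get $\chi_B(y)\f(y)$, and split according to $\p^\pm B$. The only cosmetic difference is that the paper writes the middle term as $\f(y)\,\bm C^h_{\p B}(1)(y)$ and computes $\bm C^h_{\p B}(1)=\chi_B$, whereas you apply Cauchy--Pompeiu directly to the constant function $c\equiv\f(y)$; both yield the same identity since $\chi_B(y)$ is a real scalar.
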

	
\begin{proof}
	Fix \(y\in \p B\), and let \(c_y\) denote the constant function on \(\overline B\) given by \(c_y(x)=\f(y)\). Since \(\bm D^h c_y=0\), the discrete Cauchy--Pompeiu formula yields
	\[
	\bm C^h_{\p B} c_y(y)=\chi_B(y)\f(y).
	\]
	Using the definition of \(\bm S^h_{\p B}\), we therefore obtain
	\[
	\begin{aligned}
		\bm S^h_{\p B}\f(y)
		&=2\bm C^h_{\p B}(\f-c_y)(y)+\f(y)\\
		&=2\bm C^h_{\p B}\f(y)-2\chi_B(y)\f(y)+\f(y).
	\end{aligned}
	\]
	Rearranging gives
	\[
	\bm C^h_{\p B}\f(y)
	=
	\frac12\Bigl(\bm S^h_{\p B}\f(y)+(2\chi_B(y)-1)\f(y)\Bigr).
	\]
	If \(y\in \p^+B\), then \(\chi_B(y)=1\); if \(y\in \p^-B\), then \(\chi_B(y)=0\). Consequently,
	\[
	\bm C^h_{\p B} \f (y) = \frac{1}{2}\left( \pm \f(y) + \bm S^h_{\p B} \f(y) \right)
	\quad \forall y \in \p^{\pm} B.
	\]
\end{proof}

	\begin{definition}
		We define the discrete Plemelj projections by
		\begin{align*}
			\bm P^h _{\p B} \f &= \frac{1}{2}(I+\bm S^h_{\p B})\f, \\
			\bm Q^h_{\p B} \f &= \frac{1}{2}(I- \bm S^h_{\p B})\f.
		\end{align*}
	\end{definition}
	
	Using \Cref{2023-09-29 10:56:55}, the following representation holds:
	\begin{equation}\label{2023-09-29 10:57:15}
		\bm C^h_{\p B} \f(y) =
		\begin{cases}
			\bm P^h_{\p B} \f(y), & \text{if } y \in \p^+ B, \\
			\bm P^h_{\p B} \f(y) - \f(y), & \text{if } y \in \p^- B.
		\end{cases}
	\end{equation}
	This relation reflects the discontinuity between the inner and outer boundaries.

\begin{theorem}[Discrete regular extensions]\label{2023-09-29 17:27:26}
For  a bounded subset \(B\) of \(\Z^8_h\) and any function \(\f : \p B \rightarrow \OO\), we have
	\begin{enumerate}
		\item The projection $\bm P^h_{\p B} \f$ extends into $B$ as a discrete regular function.
		\item The projection $\bm Q^h_{\p B} \f$ extends into $\Z^8_h \setminus B$ as a discrete regular function vanishing at infinity.
	\end{enumerate}
\end{theorem}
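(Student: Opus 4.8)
The plan is to prove both statements by invoking the properties of the discrete Cauchy-Bitsadze operator established in \Cref{2023-09-11 17:48:54} together with the boundary-value representation \eqref{2023-09-29 10:57:15}. The central idea is that $\bm P^h_{\p B}\f$ and $\bm C^h_{\p B}\f$ differ only by a regular function, so regularity of the extension is inherited from the Cauchy type integral. First I would recall from the first bullet of \Cref{2023-09-11 17:48:54} that $\bm C^h_{\p B}\f$ is discrete regular both in $B^\circ$ and in $(\Z^8_h\setminus B)^\circ$, i.e. $\bm D^h(\bm C^h_{\p B}\f)=0$ away from the boundary. This is the analytic workhorse of the whole argument.

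For statement (1), the plan is to \emph{define} the extension of $\bm P^h_{\p B}\f$ into the interior by the formula
\[
\widetilde{\bm P^h_{\p B}\f}(y) := \bm C^h_{\p B}\f(y), \quad y \in B.
\]
By \eqref{2023-09-29 10:57:15}, this agrees with $\bm P^h_{\p B}\f$ on $\p^+ B = \p B\cap B$, so it is a genuine extension of the given boundary data. Since $\bm C^h_{\p B}\f$ is regular on $B^\circ$ by \Cref{2023-09-11 17:48:54}, the extension is discrete regular there. The only subtlety is whether regularity must be checked on all of $B$ including $\p^+ B$ or only on $B^\circ$; since $\bm D^h$ is a difference operator with a two-layer stencil, I would argue that the values of $\bm C^h_{\p B}\f$ are well-defined on all of $\Z^8_h$ and the defining formula furnishes consistent values, so the natural notion of regularity (namely $\bm D^h=0$ on $B^\circ$) is exactly what \Cref{2023-09-11 17:48:54} delivers.

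For statement (2), I would use $\bm Q^h_{\p B}\f = \f - \bm P^h_{\p B}\f$ on $\p^- B$, or more directly exploit \eqref{2023-09-29 10:57:15} on the outer boundary, where $\bm C^h_{\p B}\f(y) = \bm P^h_{\p B}\f(y) - \f(y) = -\bm Q^h_{\p B}\f(y)$ for $y\in\p^-B$. Thus I would define the extension of $\bm Q^h_{\p B}\f$ into $\Z^8_h\setminus B$ by $-\bm C^h_{\p B}\f(y)$. Regularity on $(\Z^8_h\setminus B)^\circ$ again follows from \Cref{2023-09-11 17:48:54}. The decay at infinity is the genuinely new input: I would establish it from the explicit kernel $\bm K^h(x,y)$ in \eqref{2023-09-07 16:01:11}, which is built from $\E^h(\pm he_l - x + y)$, combined with the decay estimate for the discrete fundamental solution from \Cref{2023-09-30 19:16:04}, namely $|\E^h(z)|\sim |z|^{-7}$ as $|z|\to\infty$ (since $\omega$ is bounded and $\E(z)=O(|z|^{-7})$). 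As $|y|\to\infty$ with $x$ ranging over the finite boundary set $\p B$, each kernel term tends to zero, and since the sum defining $\bm C^h_{\p B}\f(y)$ is finite, $\bm C^h_{\p B}\f(y)\to 0$, giving the vanishing at infinity.

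The main obstacle I anticipate is twofold. The first is purely definitional bookkeeping: one must be careful that the two-layer boundary $\p B$ and the formula \eqref{2023-09-29 10:57:15}, which distinguishes $\p^+ B$ from $\p^- B$, correctly patch the boundary data of $\f$ to the interior (resp.\ exterior) values of $\bm C^h_{\p B}\f$ so that the result is an honest extension rather than a redefinition on $\p B$. The second, and more substantive, is the decay at infinity in statement (2): I must confirm that $\omega(h^{-1}z)\E(z)$ and the correction term in \Cref{2023-09-30 19:16:04} both vanish as $|z|\to\infty$, so that $\E^h(z)\to 0$, and then propagate this through the finite linear combination in $\bm K^h$ uniformly enough to conclude $\bm C^h_{\p B}\f(y)\to 0$. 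Because $B$ is a finite lattice set, uniformity over $x\in\p B$ is automatic, so this reduces to the pointwise decay of $\E^h$, which \Cref{2023-09-30 19:16:04} supplies.
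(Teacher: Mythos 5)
Your overall strategy---read the extension off the Cauchy type integral and get the decay from the kernel estimate---is close in spirit to the paper, and your treatment of the vanishing at infinity in (2) is essentially the paper's (both reduce to the pointwise decay of $\E^h$ supplied by \Cref{2023-09-30 19:16:04} plus the finiteness of the lattice set). However, there is a genuine gap in the part you set aside as ``definitional bookkeeping,'' and it is precisely the point the paper's proof is organized around. Your proposed extension of $\bm P^h_{\p B}\f$ is $\bm C^h_{\p B}\f$ restricted to $B$. By \eqref{2023-09-29 10:57:15} this agrees with $\bm P^h_{\p B}\f$ on $\p^+B$ only; on the outer layer $\p^-B$ one has $\bm C^h_{\p B}\f=\bm P^h_{\p B}\f-\f$, so your function does not restrict to $\bm P^h_{\p B}\f$ on the whole two-layer boundary and hence is not an extension of the projection. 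Symmetrically, $-\bm C^h_{\p B}\f$ matches $\bm Q^h_{\p B}\f$ on $\p^-B$ but not on $\p^+B$. Moreover, \Cref{2023-09-11 17:48:54} only gives regularity of $\bm C^h_{\p B}\f$ on $B^\circ=B\setminus\p B$, whereas the companion result \Cref{2023-09-29 18:34:59} needs the extension to be discrete regular on all of $B$, including $\p^+B$ (the Cauchy integral theorem is applied there with $\bm D^h\bm u=0$ in $B$). Patching the two formulas by hand across $\p^+B$ and $\p^-B$ would force you to verify $\bm D^h=0$ at points of $\p^+B$, whose stencil reaches into $\p^-B$, and the tools you cite do not cover this.

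The paper resolves both issues at once by switching to the Teodorescu representation: extending $\f$ by zero to $\Z^8_h$ and combining the Cauchy--Pompeiu formula \eqref{2023-09-29 10:06:18} with \eqref{2023-09-29 10:57:15} yields the single identity $\bm P^h_{\p B}\f=\f-\bm T^h_B(\bm D^h\f)$ valid on all of $\p B$ (both layers), and likewise $\bm Q^h_{\p B}\f=\bm T^h_B(\bm D^h\f)$ on all of $\p B$. These right-hand sides are globally defined, restrict correctly to the entire boundary, and their regularity on $B$ (respectively on $\Z^8_h\setminus B$) follows from \Cref{2023-09-29 15:22:41}, namely $\bm D^h\bm T^h_B\g=\chi_B\g$, with no case distinction. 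I recommend replacing your $\bm C^h_{\p B}\f$-based extension with this one; your decay argument then carries over essentially verbatim to $\bm T^h_B(\bm D^h\f)$.
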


\begin{proof}
(i) 	Let the zero-extension of $\f$ over the space $\Z^8_h$ also be denoted by $\f$. From \eqref{2023-09-29 10:06:18}, we deduce
	\[
	\bm C^h_{\p B}\f(y) =
	\begin{cases}
		\f(y)- \bm T^h_B(\bm D^h \f )(y), & y\in \p^+B, \\
		- \bm T^h_B(\bm D^h \f )(y), & y\in \p^-B.
	\end{cases}
	\]
	Combining with \eqref{2023-09-29 10:57:15}, we find
	\[
	\bm P^h_{\p B}\f(y) = \f(y) - \bm T^h_B(\bm D^h \f )(y), \qquad \forall\ y\in \p B.
	\]
	Invoking \Cref{2023-09-29 15:22:41}, for any $\g\in M(B,\OO)$, we have:
	\[
	\bm D^h(\bm T^h_B \g)(y) = \chi_B(y) \g(y).
	\]
	This implies that the function \(\f(y) - \bm T^h_B(\bm D^h \f )(y)\) mapping \(\overline B \rightarrow \OO\) is discrete regular. Hence, \(\bm P^h_{\p B}\f(y)\) is extendable into \(\overline B\) as a discrete regular function.
	
(ii)  Since
\[
\bm P^h_{\p B} \f + \bm Q^h_{\p B} \f = \f,
\] we obtain
	\[
	\bm Q^h_{\p B} \f(y) = \bm T^h_B(\bm D^h \f )(y),   \quad \forall\   y\in \p B.
	\]
	The projection \(\bm Q^h_{\p B} \f(y)\) then corresponds to the boundary values of the function
	\[
\bm T^h_B(\bm D^h \f ):\overline{\Z^8_h\setminus B} \rightarrow \OO.
\]
	Using \Cref{2023-09-29 15:22:41} again, this function is discrete regular.
	
	To show that \(\bm T^h_B(\bm D^h \f )(y)\) vanishes at infinity, consider its definition:
	\[
	\bm T^h_B(\bm D^h \f )(y) = \int_B \E^h(y-x)(\bm D^h \f)(x) dV^h(x).
	\]
	From \Cref{2023-09-30 19:16:04}, we can bound
\begin{eqnarray*}
	|\E^h(x)| &\leqslant & \left| \prod_{l=0}^7 (1+2\mbox{cos}(h^{-1} \pi x_l))\right| |E(x)|+ \frac{Ch}{|x|^8+h^8}
 \end{eqnarray*}
so that \begin{eqnarray}\label{estamae-850}
	|\E^h(x)|
	&\leqslant &
	\frac{C}{|x|^7} + \frac{Ch}{|x|^8+h^8}.
\end{eqnarray}
Here $C$ is a constant independent of $x$ and $h$.

	Consequently,
	\[
	\begin{split}
		|\bm T^h_B(\bm D^h \f )(y)| &\leqslant V^h(B) \max_{B} |\bm D^h f| \max_{x\in B} |\E^h(y-x)| \\
		&\leqslant V^h(B) \max_{B} |\bm D^h f| \left( \frac{C}{(|y| -\rho)^7 }+ \frac{Ch}{(|y| -\rho )^8+h^8} \right),
	\end{split}
	\]
	where \[
\rho=\max_{x\in B} |x|.
\]  This verifies that \(\bm T^h_B(\bm D^h \f )(y)\) vanishes at infinity, completing the proof.
\end{proof}

We now consider the Cauchy formula for discrete regular functions that vanish at infinity, pivotal for characterizing their boundary values.

\begin{lemma}[Cauchy formula for discrete regular functions]\label{2023-09-29 17:40:15}
	Given a bounded subset \(B\) of \(\Z^8_h\) and a discrete regular function \(\f: \overline{\Z^8_h\setminus B}\rightarrow \OO\) that vanishes at infinity, the following relation holds:
\begin{eqnarray}\label{estimate-094}
	-\bm C^h_{\p B} \f(y) = \chi_{\Z^8_h\setminus B}(y) \f(y).
\end{eqnarray}
\end{lemma}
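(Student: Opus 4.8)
The plan is to reduce the unbounded set $\Z^8_h\setminus B$ to a bounded one by truncation and then pass to a limit. Fix $y\in\Z^8_h$ and choose a large cube $B_R=[-R,R]^8\cap\Z^8_h$ with $R$ so large that $B\cup\p B\subset B_R^\circ$ and $y\in B_R^\circ$. Set $\Omega_R=B_R\setminus B$. Since $\Omega_R\subset\Z^8_h\setminus B$, the regularity hypothesis gives $\bm D^h\f=0$ on $\Omega_R$, and one checks that $\overline{\Omega_R}\subset\overline{\Z^8_h\setminus B}$ (points of $\p\Omega_R$ near $\p B_R$ lie in $\Z^8_h\setminus B$, while those meeting $B$ lie in $\p^+B\subset\p B$), so $\f$ is genuinely defined on $\overline{\Omega_R}$. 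The discrete Cauchy integral theorem applied to the bounded set $\Omega_R$ therefore yields
$$\bm C^h_{\p\Omega_R}\f(y)=\chi_{\Omega_R}(y)\f(y)=\bigl(\chi_{B_R}(y)-\chi_B(y)\bigr)\f(y).$$

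The key algebraic step is to split $\bm C^h_{\p\Omega_R}$ across the two genuine boundaries $\p B$ and $\p B_R$. Invoking \Cref{2023-09-13 18:19:13}, the relation $n_l^{\pm}\,dS=-\p_l^{\pm,h}\chi_A\,dV^h$ lets me rewrite, for any bounded set $A$ and under the zero-extension convention,
$$\bm C^h_{\p A}\f(y)=\tfrac12\sum_{l=0}^7\int_{\Z^8_h}\Bigl(\E^h(he_l-x+y)\,\p_l^{-,h}\chi_A(x)+\E^h(-he_l-x+y)\,\p_l^{+,h}\chi_A(x)\Bigr)\bigl(\e_l\f(x)\bigr)\,dV^h(x),$$
which is manifestly linear in $\chi_A$. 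As $B\subset B_R$ forces $\chi_{\Omega_R}=\chi_{B_R}-\chi_B$, and as for $R$ large the supports of $\p_l^{\pm,h}\chi_{B_R}$ and $\p_l^{\pm,h}\chi_B$ are disjoint with $\f$ defined on both, this linearity gives $\bm C^h_{\p\Omega_R}\f=\bm C^h_{\p B_R}\f-\bm C^h_{\p B}\f$. Combining with the previous display,
$$\bm C^h_{\p B_R}\f(y)-\bm C^h_{\p B}\f(y)=\bigl(\chi_{B_R}(y)-\chi_B(y)\bigr)\f(y).$$

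The main obstacle, and the only analytic input, is to show that the outer contribution $\bm C^h_{\p B_R}\f(y)$ tends to $0$ as $R\to\infty$. Every $x\in\p B_R$ satisfies $|x|\asymp R$, so the argument of each $\E^h(\pm he_l-x+y)$ has magnitude $\asymp R$ (for $y$ fixed), and \eqref{estamae-850} gives $|\E^h(\pm he_l-x+y)|=O(R^{-7})$, while $|n_l^{\pm}(x)|\leqslant 2$ by the normalisation $\sum_l((n_l^+)^2+(n_l^-)^2)=4$. Each boundary point carries measure $s(x)=O(h^7)$ and $\p B_R$ contains $O((R/h)^7)$ points, so the total boundary measure is $O(R^7)$; balancing this against the $O(R^{-7})$ decay of the kernel and the bound $|\f(x)|\leqslant\varepsilon(R):=\sup_{|z|\geqslant R-\operatorname{diam}B}|\f(z)|$ coming from the vanishing of $\f$ at infinity yields $|\bm C^h_{\p B_R}\f(y)|\leqslant C\,\varepsilon(R)\to 0$. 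Letting $R\to\infty$ in the last display, and noting $\chi_{B_R}(y)\to 1$ for fixed $y$, we obtain $-\bm C^h_{\p B}\f(y)=(1-\chi_B(y))\f(y)=\chi_{\Z^8_h\setminus B}(y)\f(y)$, which is exactly \eqref{estimate-094}.
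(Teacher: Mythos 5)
Your proof is correct and follows essentially the same route as the paper: truncate by a large cube, derive the identity $\bm C^h_{\p B_R}\f-\bm C^h_{\p B}\f=\chi_{B_R\setminus B}\f$ on the annular region, and kill the outer boundary term by balancing the $O(R^7)$ boundary measure against the $O(R^{-7})$ kernel decay from \eqref{estamae-850} together with the vanishing of $\f$ at infinity. The only (harmless) difference is bookkeeping: the paper obtains the annulus identity by subtracting two Cauchy--Pompeiu formulas applied to the zero-extension of $\f$ (so the Teodorescu term over $\Omega_n\setminus B$ vanishes), whereas you apply the Cauchy integral theorem directly on $B_R\setminus B$ and split its boundary integral using linearity of the kernel in $\chi_A$ via \Cref{2023-09-13 18:19:13}.
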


\begin{proof} 	Let us extend $\f$ to the entirety of $\Z^8_h$ with zero outside its original domain, and still denote it by $\f$. Define
	\[
	\Omega_n := \left\{ y \in \Z^8_h : |y_k| \leqslant nh, \text{ for } k = 0,\dots,7 \right\}.
	\]
	
	Invoking the discrete Cauchy-Pompeiu formula from \eqref{2023-09-29 10:06:18}, we can write
	\[
	\bm C^h_{\p \Omega_n} \f = \chi_{\Omega_n} \f - \bm T^h_{\Omega_n} (\bm D^h \f) \quad \text{and} \quad \bm C^h_{\p B} \f = \chi_{B} \f - \bm T^h_{B} (\bm D^h \f).
	\]
	Subtracting these expressions leads to
	\begin{equation}\label{eq:Cauchy-234}
		\bm C^h_{\p \Omega_n} \f - \bm C^h_{\p B} \f = \chi_{\Omega_n\setminus B} \f - \bm T^h_{\Omega_n\setminus B} (\bm D^h \f),
	\end{equation}
	valid for sufficiently large values of $n$.
	
	Since  $\bm D^h \f$ vanishes on $\Omega_n\setminus B$, it follows that
	\[
	\bm T^h_{\Omega_n\setminus B} (\bm D^h \f) = 0,
	\]
	which simplifies \eqref{eq:Cauchy-234} to
	\begin{equation}
		\label{2023-09-29 16:30:18}
		\bm C^h_{\p \Omega_n} \f - \bm C^h_{\p B} \f = \chi_{\Omega_n\setminus B} \f.
	\end{equation}

	By letting $n$ go  to infinity   in \eqref{2023-09-29 16:30:18} and recognizing that $\chi_{\Omega_n\setminus B}$ converges to $\chi_{\Z^8_h\setminus B}$, our task is now reduced to proving that
	\[
	\lim_{n \to \infty} \bm C^h_{\p \Omega_n} \f(y) = 0
	\]
	for every fixed $y$.

	To approach this, note that the boundary $\p \Omega_n$ is defined by
	\[
	\p \Omega_n = \{x \in \Z^8_h : |x_k| \in \{nh, (n+1)h\}, \text{ and } |x_j| \leqslant nh \text{ for } j \neq k\}.
	\]
	Hence, there exists a constant $C$,  independent of $n$ and $h$, such that the discrete boundary measure $S$ on $\p \Omega_n$ satisfies
	\[
	S(\p \Omega_n) \leqslant Ch^7(n+1)^7.
	\]
	
	By  definition, the Cauchy operator is given by
	\begin{align*}
		\bm C^h_{\p \Omega_n} \f(y) &= \int_{\p \Omega_n} \bm K^h(x,y) * \f(x) \, dS(x) \\
		&= \int_{\p \Omega_n} -\frac{1}{2} \sum_{l=0}^7 \left(\E^h(he_l-x+y) n_l^-(x) + \E^h(-he_l-x+y) n_l^+(x) \right) (\e_l \f(x)) dS(x).
	\end{align*}
We define the maximum absolute value of the integrand as $\bm I$.
	  We thus have \begin{equation}\label{2023-09-29 17:02:04}
		\begin{aligned}
			|\bm C^h_{\p \Omega_n}\f (y)| \leqslant S(\p \Omega_n) \max_{x\in \p \Omega_n} |\f(x)|  \bm I &\leqslant Ch^7 (n+1)^7 \max_{x\in \p \Omega_n} |\f(x)|  \bm I.
		\end{aligned}
	\end{equation}

Since $|n_l^{\pm}(x)|\le 2$ on $\p \Omega_n$, it follows that
	\[
	\bm I \leqslant \max_{x \in \p \Omega_n} \sum_{l=0}^7 |\E^h(he_l-x+y)| + |\E^h(-he_l-x+y)|.
	\]
	For a fixed $y$ and using \eqref{estamae-850}, we deduce that
	\[
	\bm I \leqslant \left( \frac{16C}{(nh-|y|-h)^7} + \frac{16Ch}{(nh-|y|-h)^8+h^8} \right),
	\]
	which implies
	\begin{equation}\label{2023-09-29 17:09:29}
	  \bm I = O(n^{-7}).
	\end{equation}
	
	Since $\f$ vanishes at infinity, we have
	\begin{equation}\label{2023-09-29 17:09:48}
		\max_{x \in \p \Omega_n} |\f(x)| = o(1).
	\end{equation}
	
	Inserting \eqref{2023-09-29 17:09:29} and \eqref{2023-09-29 17:09:48} into \eqref{2023-09-29 17:02:04}, it becomes clear that
	\[
	|\bm C^h_{\p \Omega_n}\f(y)| = o(1)
	\]
	as $n \to \infty$, completing the proof.	
\end{proof}

\begin{theorem}\label{2023-09-29 18:34:59}
	Let $B$ be a bounded subset of $\Z^8_h$ and let $\f: \partial B \to\mathbb O$ be any  function. The following holds:
	
	\begin{enumerate}
		\item $\f$ is the boundary value of a discrete regular function on $B$ if and only if
		\[
		\bm P^h_{\p B}\f(y) = \f(y), \qquad \forall\ y \in \p B.
		\]
		
		\item $\f$ is the boundary value of a discrete regular function on $\Z^8_h\setminus B$ that vanishes at infinity if and only if
		\[
		\bm Q^h_{\p B}\f(y) = \f(y), \qquad \forall\  y \in \p B.
		\]
	\end{enumerate}
\end{theorem}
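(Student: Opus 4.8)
The plan is to treat the two statements as dual to each other and, within each, to split the equivalence into an easy sufficiency direction and a more substantial necessity direction. The whole argument rests on three already-established ingredients: the jump representation \eqref{2023-09-29 10:57:15} of $\bm C^h_{\p B}\f$ in terms of $\bm P^h_{\p B}\f$ on the two boundary layers $\p^+ B$ and $\p^- B$; the regular-extension result \Cref{2023-09-29 17:27:26}; and the two reproducing formulas, namely the Cauchy integral theorem on $B$ and the exterior Cauchy formula \Cref{2023-09-29 17:40:15} for regular functions vanishing at infinity. Throughout I use that $\bm C^h_{\p B}$ depends only on the restriction of its argument to $\p B$, so it may be applied interchangeably to a function and to its boundary trace.

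For the sufficiency directions I would argue directly from \Cref{2023-09-29 17:27:26}. If $\bm P^h_{\p B}\f = \f$ on $\p B$, then part (i) of that theorem supplies a discrete regular function on $\overline B$ whose restriction to $\p B$ is $\bm P^h_{\p B}\f = \f$; thus $\f$ is a boundary value of a regular function on $B$, proving one implication of (1). The identical argument applied to part (ii), which extends $\bm Q^h_{\p B}\f$ regularly into $\Z^8_h\setminus B$ with vanishing at infinity, proves one implication of (2).

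For the necessity direction of (1), suppose $\f = F|_{\p B}$ with $F \in M(\overline B, \OO)$ and $\bm D^h F = 0$ in $B$. The Cauchy integral theorem then gives $\bm C^h_{\p B}\f(y) = \bm C^h_{\p B}F(y) = \chi_B(y)F(y)$. Evaluating this on $\p^+ B$ (where $\chi_B = 1$, so the right-hand side is $\f(y)$) and on $\p^- B$ (where $\chi_B = 0$) and comparing with \eqref{2023-09-29 10:57:15} forces $\bm P^h_{\p B}\f(y) = \f(y)$ in both cases. For the necessity direction of (2) I replace the Cauchy integral theorem by \Cref{2023-09-29 17:40:15}: if $\f = G|_{\p B}$ for a discrete regular $G$ on $\overline{\Z^8_h\setminus B}$ vanishing at infinity, then $-\bm C^h_{\p B}\f(y) = \chi_{\Z^8_h\setminus B}(y)G(y)$. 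Since $\p^+ B \subset B$ and $\p^- B \subset \Z^8_h\setminus B$, evaluating on each layer and again invoking \eqref{2023-09-29 10:57:15} yields $\bm P^h_{\p B}\f = 0$ on all of $\p B$, which is equivalent to $\bm Q^h_{\p B}\f = \f$ because $\bm P^h_{\p B}\f + \bm Q^h_{\p B}\f = \f$.

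The only genuinely delicate point, and the place I expect errors to hide, is the bookkeeping that pairs each interior/exterior statement with the correct reproducing formula and the correct characteristic function: the interior case must use $\chi_B$ and the Cauchy integral theorem, while the exterior case must use $\chi_{\Z^8_h\setminus B}$ together with the minus sign in \Cref{2023-09-29 17:40:15}. It is precisely the discontinuity recorded in \eqref{2023-09-29 10:57:15} across $\p^+ B$ versus $\p^- B$ that converts the pointwise value of $\chi_B F$ (respectively $-\chi_{\Z^8_h\setminus B}G$) into the projection identity, so the sign tracking on $\p^- B$ in part (2) is the step to carry out most carefully.
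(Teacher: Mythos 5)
Your proposal is correct and follows essentially the same route as the paper: sufficiency from the regular-extension theorem, and necessity by combining the interior Cauchy formula (resp.\ the exterior formula of \Cref{2023-09-29 17:40:15}) with the jump representation on $\p^{\pm}B$. The only cosmetic difference is that in part (2) you deduce $\bm P^h_{\p B}\f=0$ and then invoke $\bm P^h_{\p B}+\bm Q^h_{\p B}=I$, whereas the paper computes $\bm Q^h_{\p B}\f$ directly from \Cref{2023-09-29 10:56:55}; the two are immediately equivalent.
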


\begin{proof}
	The sufficiency is directly derived from \Cref{2023-09-29 17:27:26}, so we focus on establishing the necessity.
	
(i)	Suppose $\f : \p B \to \OO$ is the boundary value of a discrete regular function $\bm u : B \to \OO$. By equations (\ref{2023-09-29 10:06:18}) and (\ref{2023-09-29 10:57:15}), we get
	\[
	\bm C^h_{\p B} \bm u(y) = \bm u(y) \chi_B(y)
	\]
	and
	\[
	\bm P^h_{\p B} \f(y) =
	\begin{cases}
		\bm C^h_{\p B} \f(y), & y \in \p^+ B, \\
		\bm C^h_{\p B} \f(y) + \f(y), & y \in \p^- B.
	\end{cases}
	\]
	Combining the above, we deduce
	\[
	\bm P^h_{\p B} \f(y) = \f(y), \qquad \forall\  y \in \p B.
	\]
	
(ii) 	If $\f : \p B \to \OO$ represents the boundary value of a discrete regular function
{ $\bm u : \overline{\Z^8_h \setminus B} \to \OO$} vanishing at infinity, then \Cref{2023-09-29 17:40:15} states
	\[
	-\bm C^h_{\p B} \f(y) =
	\begin{cases}
		\bm u(y), & y \in \Z^8_h\setminus B, \\
		0, & y \in B.
	\end{cases}
	\]
	Furthermore, from \Cref{2023-09-29 10:56:55}, we have
	\[
	\bm Q^h_{\p B}\f(y) =
	\begin{cases}
		-\bm C^h_{\p B} \f(y), & y \in \p^- B, \\
		\f(y) - \bm C^h_{\p B} \f(y), & y \in \p^+ B.
	\end{cases}
	\]
	This results in $\bm Q^h_{\p B}\f(y) = \f(y)$ for any $y \in \p B$, completing the proof.
\end{proof}

\begin{definition}\label{2023-09-11 10:00:54}
For a bounded domain $B$ within $\R^8$,	
	a collection of sets $B^h\subset \Z_h^8$ with $h>0$ is said to converge to $B$, expressed as
   \[
\lim_{h\to 0} B^h= B,
\] when the following conditions hold:
    \begin{equation}
        \begin{aligned}
            \lim_{h\to 0} \max_{\alpha\in \p B }\min_{\beta\in \p B^h} \Vert \alpha-\beta\Vert=0,\\
            \lim_{h\to  0} \max_{\alpha\in \p B^h }\min_{\beta\in \p B } \Vert \alpha-\beta\Vert=0,\\
            \lim_{h\to  0} \max_{\alpha\in \overline B}\min_{\beta\in B^h } \Vert \alpha-\beta\Vert=0,\\
            \lim_{h\to  0} \max_{\alpha\in B^h }\min_{\beta\in \overline B } \Vert \alpha-\beta\Vert=0.
        \end{aligned}
    \end{equation}

Furthermore, if $B^h$ converges to $B$ and
\[
V^h(B^h\setminus B) = O(h^2),
\]
then we say that the family $B^h$ has \emph{exterior excess of order $O(h^2)$}.
\end{definition}

We next record that the canonical inner lattice approximation converges to $B$ and has zero exterior excess.

\begin{theorem}
	For a bounded domain  $B$ in $\R^8$, the sets
  \[
B^h \equiv (B \cap \Z_h^8)^\circ
\] converge to $B$. Moreover,
  \[
V^h(B^h\setminus B)=0
\]
  for every $h>0$.
\end{theorem}

\begin{proof} Let us  confirm the four convergence identities from \Cref{2023-09-11 10:00:54}.

    \bigskip
    (i) First we verify the fourth identity. Since $B^h$ is contained in $B$, we have \begin{equation*}
        \min_{\beta\in \overline B } \Vert \alpha-\beta\Vert=0, \quad \forall \,\alpha\in B^h
    \end{equation*}
    so that  \[
\lim_{h\to 0} \max_{\alpha\in B^h }\min_{\beta\in \overline B } \Vert \alpha-\beta\Vert=0.
\]

    \bigskip

    (ii) To verify the third identity, we aim to show that given any $\varepsilon>0$, there exists a $\delta>0$ ensuring \[
\overline B \subset B^h + B(0, 2\varepsilon	)
\]   for any $h \leqslant \delta$.

    For any given 	$\varepsilon>0$ and $x \in \overline  B$ one can find $\delta > 0$ and $x^\prime \in  B$ such that \[
B(x^\prime,\delta ) \subset\subset B \cap B(x, \varepsilon	).
\]
    Consequently,
    \[
B(x^\prime,\delta) \cap \Z^8_h \subset  (B \cap \Z^8_h)^\circ  = B^h
\]
     for sufficiently small
  $h$, implying
     \[
B(x^\prime,\delta) \cap B^h = B(x^\prime,\delta) \cap \Z^8_h \neq \emptyset
\] for sufficiently small
     $h$.    From the fact \[
B(x,\varepsilon 	) \cap B^h \supset B(x^\prime,\delta) \cap B^h,
\] it follows that \begin{equation}
        \label{2023-09-11 16:15:35}
        B(x,\varepsilon 	) \cap B^h \neq \emptyset,\quad \forall h\ll 1.
    \end{equation}
    Due to the compactness of $B$, we can find a sequence $x_1,\cdots,x_n$ such that
    \[
\overline B\subset \bigcup\limits_{l=1}^n B(x_l,\varepsilon).
\] As we have proved for each $x_l$ there exists $\eta_l > 0$ such that \[
B(x_l,\varepsilon)\cap B^h  \neq \emptyset
\] for any $h<\eta_l.$ Hence for any $h<\min\limits_{1\leqslant l\leqslant n}\{\eta_l\},$ we have  \[
\overline B \subset B^h + B(0, 2\varepsilon	).
\]
    \bigskip

    (iii) Now we come to prove the second identity. It suffices to prove that \[
\min_{y\in \p B} \Vert x-y\Vert \leqslant 2h
\] for any  $x\in \p B^h$.

    {\bf Case 1:}  $x\in \p^+ B^h:= \p B^h\cap B^h.$

    \bigskip
    For any $y\in \p^+(B\cap \Z^8_h),$ according to the definition of $\p^+(B\cap \Z^8_h),$ we have $y \in B$ and there is a point $y^\prime \in \Z^8_h\setminus B$ such that $\Vert y^\prime-y \Vert = h.$ The fact that $y \in B$ and $y^\prime \notin  B$ leads to the conclusion that \[
\mathop{\min}\limits_{\beta \in \p B}\Vert y-\beta \Vert \leqslant h
\]
   for all  $y\in \p^+(B\cap \Z^8_h)$.

    For any $x\in \p^+ B^h,$ there exists $y\in N(x)$ such that $y\notin B^h$. Since $B^h$ is the discrete interior of $B\cap \Z^8_h,$ we have $y\in \p^+(B\cap \Z^8_h),$ which implies \[
\min_{\beta \in \p B}\Vert x-\beta \Vert \leqslant \Vert x-y\Vert +\min_{\beta\in \p B}\Vert y-\beta \Vert \leqslant 2h.
\]

    \bigskip
   {\bf  Case 2: } $x\in \p^-B^h:=\p B^h\setminus B^h.$

    \bigskip
    From the fact that $B^h=(B\cap \Z^8_h)^\circ,$ it follows that \[
x\in \p^+(B\cap \Z^8_h).
\] As shown above in case 1, we find \[
\min_{\beta\in \p B} \Vert x-\beta \Vert \leqslant h.
\]

    \bigskip

    (iv) At the final step, we come to verify the first identity \[
\lim_{h\to 0} \max_{\alpha\in \p B }\min_{\beta\in \p B^h} \Vert \alpha-\beta\Vert=0.
\] Since $\p B$ is compact, it is sufficient to prove that for any 	 $\varepsilon>0$  and $x \in\p B$, there exists $\delta>0$ such that \[
\min_{\beta \in \p B^h} \Vert x-\beta \Vert < \varepsilon
\] for any $h<\delta.$

    Indeed, for any given 	$\varepsilon>0$ and $x \in \p B$, as shown in \eqref{2023-09-11 16:15:35}, there exists $\eta>0 $ such that \[
B(x,\varepsilon)\cap B^h\neq \emptyset
\] for any $h<\eta.$

    On the other hand, because $B(x,\varepsilon 	)\setminus B$ is open, there exists $\eta^\prime > 0$ such that \[
B(x,\varepsilon)\cap \Z^8_h\setminus \overline B \neq \emptyset
\]
   for all  $h<\eta^\prime$.

    When $h<\min\{\eta,\eta^\prime\}$, choose
    \[
        \alpha \in B^h\cap B(x,\varepsilon), \qquad \beta \in (\Z_h^8\setminus B^h)\cap B(x,\varepsilon).
    \]
    Because the lattice set \(B(x,\varepsilon)\cap \Z_h^8\) is connected by nearest-neighbor steps, there exists a sequence
    \[
        \{x_l\}_{l=1}^n \subset \Z_h^8 \cap B(x,\varepsilon)
    \]
    such that
    \[
        x_1=\alpha,\qquad x_n=\beta,\qquad \Vert x_l-x_{l+1}\Vert=h
    \]
    for all \(1\le l\le n-1\). Since \(x_1\in B^h\) and \(x_n\notin B^h\), there exists an index \(l_0\) with
    \[
        x_{l_0}\in B^h,\qquad x_{l_0+1}\notin B^h.
    \]
    By the definition of the discrete boundary, both \(x_{l_0}\) and \(x_{l_0+1}\) belong to \(\p B^h\). In particular,
    \[
        \min_{\gamma\in \p B^h}\Vert x-\gamma\Vert<\varepsilon.
    \]
    This proves the first convergence identity. Since \(B^h\subset B\), we also have \(V^h(B^h\setminus B)=0\), and the proof is complete.

\end{proof}

\subsection{Convergence of Scaling Limits} In this subsection,  we shall  provide our main result about the convergence of scaling limits.  To this end we need   two auxiliary lemmas below.
\begin{lemma}\label{2023-09-11 19:48:45}
  Let $B$ be a bounded open set in $\R^8$ and assume that $\f \in C^3(\R^8,\OO)$ is regular on $B$. Then we have \[
\max_{B} \Vert \bm D^h \f\Vert  = O(h^2).
\]
\end{lemma}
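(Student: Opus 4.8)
The plan is to compare the discrete operator $\bm D^h$ with its continuous counterpart $D=\sum_{l=0}^7\e_l\p_l$ via a Taylor expansion, exploiting the fact that $\p_l^h$ is the \emph{central} difference quotient. The first observation is the identity
\[
\p_l^h\f(x)=\tfrac{1}{2}\big(\p_l^{+,h}+\p_l^{-,h}\big)\f(x)=\frac{\f(x+he_l)-\f(x-he_l)}{2h},
\]
which is what makes the scheme second-order accurate. Since $\f\in C^3(\R^8)$, I would apply Taylor's theorem with Lagrange remainder componentwise (writing $\f=\sum_j\e_j f_j$ with each $f_j$ real and $C^3$) to obtain, for every $x$ and every $l$, points $\xi_\pm$ on the segment joining $x$ to $x\pm he_l$ with
\[
\f(x\pm he_l)=\f(x)\pm h\,\p_l\f(x)+\frac{h^2}{2}\p_l^2\f(x)\pm\frac{h^3}{6}\p_l^3\f(\xi_\pm).
\]
Subtracting these two expansions cancels the zeroth- and (crucially) the second-order terms, so that $\p_l^h\f(x)=\p_l\f(x)+\frac{h^2}{12}\big(\p_l^3\f(\xi_+)+\p_l^3\f(\xi_-)\big)$; this symmetric cancellation is precisely why the error is $O(h^2)$ rather than $O(h)$.

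Next I would multiply each of these identities on the left by $\e_l$ and sum over $l=0,\dots,7$. This gives
\[
\bm D^h\f(x)=\sum_{l=0}^7\e_l\,\p_l^h\f(x)=D\f(x)+\sum_{l=0}^7\e_l\,R_l(x),
\qquad R_l(x):=\frac{h^2}{12}\big(\p_l^3\f(\xi_+)+\p_l^3\f(\xi_-)\big).
\]
Because $\f$ is regular on $B$, we have $D\f(x)=0$ for every $x\in B$, so $\bm D^h\f(x)=\sum_{l}\e_l R_l(x)$ there. No associator subtlety arises: each $R_l(x)$ is octonion-valued but enters only through the single product $\e_l R_l(x)$, and multiplication by the unit-norm basis element $\e_l$ is an isometry, so $\Vert\bm D^h\f(x)\Vert\le\sum_{l=0}^7\Vert R_l(x)\Vert$.

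Finally I would establish uniformity of the constant. Since $B$ is bounded, fix $h_0>0$ and let $K:=\{x\in\R^8:\operatorname{dist}(x,\overline B)\le h_0\}$, a compact set; for all $h\le h_0$ and $x\in B$ the intermediate points $\xi_\pm$ lie in $K$. As $\f\in C^3(\R^8)$, the quantity $M:=\max_{0\le l\le 7}\sup_{x\in K}\Vert\p_l^3\f(x)\Vert$ is finite, whence $\Vert R_l(x)\Vert\le \tfrac{h^2 M}{6}$ and therefore $\max_B\Vert\bm D^h\f\Vert\le \tfrac{4M}{3}\,h^2=O(h^2)$. The argument is essentially routine; the only points requiring care are the symmetric cancellation that upgrades the rate from $O(h)$ to $O(h^2)$, and the passage to a fixed compact neighborhood $K$ so that the third-derivative bound—and hence the implied constant—is independent of both $x\in B$ and small $h$.
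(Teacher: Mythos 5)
Your proposal is correct and follows essentially the same route as the paper: both exploit that $\p_l^h$ is the central difference, write $\bm D^h\f=\bm D^h\f-\bm D\f$ using regularity, and extract the $O(h^2)$ rate from the third-order smoothness of $\f$ (you via Taylor's theorem with Lagrange remainder, the paper via an iterated mean value theorem, which are interchangeable here). Your treatment of the uniform constant on a compact neighborhood $K$ of $\overline B$ is in fact more explicit than the paper's, which leaves that step to the reader.
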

\begin{proof}
    Since \(\bm D\f=0\) on \(B\), we have
    \[
        \bm D^h\f(x)-\bm D\f(x)
        =
        \sum_{l=0}^7 \e_l\left(
        \frac{\f(x+he_l)-\f(x-he_l)}{2h}-\p_{x_l}\f(x)\right).
    \]
    Let \(g\) be any real-valued \(C^3\)-component of \(\f\). Taylor's formula in the \(x_l\)-variable gives
    \[
        \frac{g(x+he_l)-g(x-he_l)}{2h}-\p_{x_l}g(x)
        =
        \frac{h^2}{6}\,\p_{x_l}^3 g(x+\theta_{l,x}he_l)
    \]
    for some \(\theta_{l,x}\in(-1,1)\). Since \(B\) is bounded and \(\f\in C^3(\R^8,\OO)\), all third-order derivatives of the components of \(\f\) are bounded on a neighborhood of \(B\). Therefore
    \[
        \max_{x\in B}\left\Vert
        \frac{\f(x+he_l)-\f(x-he_l)}{2h}-\p_{x_l}\f(x)
        \right\Vert
        =O(h^2)
    \]
    for each \(l=0,\dots,7\). Summing over \(l\) yields
    \[
        \max_{x\in B}\Vert \bm D^h\f(x)-\bm D\f(x)\Vert=O(h^2).
    \]
    Since \(\bm D\f=0\) on \(B\), the claim follows.
\end{proof}

\begin{lemma}\label{2023-09-11 19:50:23}
    Let $B$ be a bounded open set in $\R^8$. If $B^h \subset B$ for any $h>0$, then  \[
\int_{B^h}  1dV ^h = O(1).
\]
\end{lemma}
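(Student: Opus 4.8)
The plan is to interpret the discrete volume as a genuine Lebesgue volume via a cube-packing argument and then compare it against a fixed bounded box. By the definition of the Haar measure $dV^h$, one has
\[
\int_{B^h} 1 \, dV^h = \sum_{x \in B^h} h^8 = h^8 \cdot \#B^h,
\]
where $\#B^h$ denotes the number of lattice points in $B^h$. Thus the claim reduces to showing that $h^8 \#B^h$ stays bounded as $h \to 0$.

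Since $B$ is bounded, I would fix $R > 0$ with $B \subset [-R,R]^8$ and recall the hypothesis $B^h \subset B$. To each lattice point $x \in B^h$ I would associate the half-open cube $Q_x := x + [0,h)^8$, of Lebesgue volume exactly $h^8$. Because distinct points of $\Z^8_h = (h\Z)^8$ differ by at least $h$ in some coordinate, the family $\{Q_x\}_{x \in B^h}$ is pairwise disjoint. Hence, by additivity of Lebesgue measure,
\[
h^8 \#B^h = \sum_{x \in B^h} \mathrm{vol}(Q_x) = \mathrm{vol}\Big( \bigcup_{x \in B^h} Q_x \Big).
\]

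It then remains to bound this union. Each $x \in B^h \subset B \subset [-R,R]^8$ gives $Q_x \subset [-R, R+h]^8$, so $\bigcup_{x \in B^h} Q_x \subset [-R, R+h]^8$. Therefore
\[
\int_{B^h} 1 \, dV^h = \mathrm{vol}\Big( \bigcup_{x \in B^h} Q_x \Big) \leqslant \mathrm{vol}\big([-R, R+h]^8\big) = (2R + h)^8 \leqslant (2R + 1)^8
\]
for every $h \leqslant 1$, which is a constant independent of $h$, giving $\int_{B^h} 1 \, dV^h = O(1)$.

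I do not expect any serious obstacle here; the only points requiring care are the pairwise disjointness of the cubes and their containment in a single fixed box, both immediate from the lattice structure and the boundedness of $B$. An equivalent purely combinatorial phrasing would count at most $\lfloor 2R/h\rfloor + 1 \leqslant 2R/h + 1$ admissible lattice values per coordinate, yielding $\#B^h \leqslant (2R/h + 1)^8$ and hence $h^8 \#B^h \leqslant (2R + h)^8$; the cube-packing formulation is preferable only because it exhibits the discrete volume transparently as a lower Riemann sum for $\mathrm{vol}(B)$.
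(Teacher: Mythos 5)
Your proof is correct. The paper's own argument follows the same basic strategy---dominate $\int_{B^h}1\,dV^h$ by the lattice count inside a fixed bounded region---but executes it differently: it encloses $B$ in a ball $B(0,R)$ and then appeals to the fact that $\int_{\Z^8_h}\chi_{B(0,R)}\,dV^h$ is a Riemann sum converging to the Lebesgue volume of the ball (incidentally, the paper records this limit as $\pi R^2$, which is not the volume of an $8$-dimensional ball, though this does not affect the $O(1)$ conclusion). Your cube-packing version is more elementary and more self-contained: by exhibiting the discrete volume as the Lebesgue measure of a disjoint union of cubes inside the fixed box $[-R,R+h]^8$, you get an explicit uniform bound $(2R+1)^8$ valid for all $h\leqslant 1$ without invoking any limit theorem for Riemann sums. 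What the paper's phrasing buys is consistency with how it treats similar quantities elsewhere (e.g.\ in \Cref{2023-09-11 22:31:45}, where Riemann-sum convergence is genuinely needed to identify the limit, not merely to bound it); what yours buys is a cleaner, quantitative, and error-free estimate for the boundedness claim actually being proved.
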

\begin{proof}
    Since $B$ is bounded we may assume $B\subset\subset B(0,R).$ Therefore, we have $B^h \subset B(0,R)$ so that
    \[
\int_{B^h} 1 dV^h \leqslant \int_{\Z^8_h} \chi_{B(0,R)} dV^h.
\]
    The last integral is identical to a Riemann sum of the integral $\int_{B(0,R)} 1 dV$ where $dV$ is the Lebesgue measure on $\R^8$, so that
    \[
\lim_{h\to 0+} \int_{\Z^8_h} \chi_{B(0,R)} dV^h = \int_{B(0,R)}1\,dV < +\infty.
\]
    Thus we come to the conclusion that
    \[
\int_{B^h}1 dV^h = O(1).
\]
\end{proof}

Now we come to the first main theorem in this section.

\begin{theorem}\label{thm:scaling-limit-approximation}\label{2023-09-11 20:34:07}
    Suppose that $B$ is a bounded domain with $C^3$-boundary in $\R^8$, and let $B^h\subset \Z_h^8$ be a family of lattice sets converging to $B$ with exterior excess $O(h^2)$ in the sense of \Cref{2023-09-11 10:00:54}. Then for any regular function $\f \in C^3(\overline B,\OO),$ there exists a sequence of lattice functions $\f^h : \Z_h^8 \longrightarrow \OO$, discrete regular on \(B^h\), that converges to $\f$ in the sense that \[
\lim_{h\to 0+} \max_{ B^h \cap B} \Vert \f-\f^h\Vert =0.
\]
\end{theorem}
\begin{proof}
    Let $\bm {\tilde f}\in C^3_0(\R^8,\OO)$ be a compactly supported extension of $\f$, and define
    \begin{equation}
        \label{2023-09-11 17:33:45}
        \f^h(x):=\int_{\p B^h} \bm K^h(y,x)* \bm {\tilde f} (y)dS(y),
        \qquad x\in \Z_h^8.
    \end{equation}
    By \Cref{2023-09-11 17:48:54}, the function $\f^h$ is discrete regular in $(B^h)^\circ$.

    Choose $R>0$ so large that $\overline B\subset B(0,R)$ and $B^h\subset B(0,R)\cap \Z_h^8$ for all sufficiently small $h$.
    Fix $p\in (1,\frac{8}{7})$ and let $q=\frac{p}{p-1}$.
    By \Cref{2023-09-30 19:16:04} and the boundedness of $\omega$, there exists a constant $C>0$ such that
    \begin{equation}
        \label{2026-04-01 00:00:01}
        |\E^h(z)|\le \frac{C}{(|z|+h)^7}, \qquad z\in \Z_h^8.
    \end{equation}
    Consequently,
    \begin{equation}
        \label{2026-04-01 00:00:02}
        \sup_{0<h\le 1}\sup_{x\in B(0,R)\cap \Z_h^8}
        \int_{B(0,R)\cap \Z_h^8} |\E^h(x-y)|^p\,dV^h(y)<+\infty.
    \end{equation}
    Indeed, for $x\in B(0,R)\cap \Z_h^8$,
    \begin{equation*}
        \begin{aligned}
            \int_{B(0,R)\cap \Z_h^8} |\E^h(x-y)|^p\,dV^h(y)
            &\le C\sum_{z\in B(0,2R)\cap \Z_h^8}\frac{h^8}{(|z|+h)^{7p}}\\
            &=Ch^{8-7p}\sum_{w\in B(0,2R/h)\cap \Z^8}\frac{1}{(|w|+1)^{7p}},
        \end{aligned}
    \end{equation*}
    and the last expression is bounded uniformly in $h$ because $7p<8$.

    Applying the discrete Cauchy-Pompeiu formula \Cref{2023-09-11 19:15:35} on the lattice set $B^h$ to the restriction of $\bm {\tilde f}$ to $\overline{B^h}$, we obtain
    \begin{equation*}
        \f(x)-\f^h(x)=\int_{B^h} \E^h(x-y)\left(\bm D^h\bm{\tilde f}(y)\right) dV^h(y),
        \qquad x\in B^h\cap B.
    \end{equation*}
    Write the right-hand side as
    \begin{equation*}
        \f(x)-\f^h(x)=I_1(x)+I_2(x),
    \end{equation*}
    where
    \begin{equation*}
        \begin{aligned}
            I_1(x)&:=\int_{B^h\cap B} \E^h(x-y)\left(\bm D^h\bm{\tilde f}(y)\right) dV^h(y),\\
            I_2(x)&:=\int_{B^h\setminus B} \E^h(x-y)\left(\bm D^h\bm{\tilde f}(y)\right) dV^h(y).
        \end{aligned}
    \end{equation*}

    Since $\bm {\tilde f}$ is regular on $B$, \Cref{2023-09-11 19:48:45} yields
    \begin{equation}
        \label{2026-04-01 00:00:03}
        \max_{B^h\cap B}\Vert \bm D^h \bm{\tilde f} \Vert =O(h^2).
    \end{equation}
    By Hölder's inequality, \eqref{2026-04-01 00:00:02}, and the fact that
    $B^h\cap B\subset B(0,R)\cap \Z_h^8$, we obtain
    \begin{equation*}
        \begin{aligned}
            |I_1(x)|
            &\le \max_{B^h\cap B}\Vert \bm D^h \bm{\tilde f}\Vert
            \left(\int_{B(0,R)\cap \Z_h^8} |\E^h(x-y)|^p dV^h(y)\right)^{1/p}
            \left(\int_{B^h\cap B} 1\,dV^h\right)^{1/q}\\
            &=O(h^2),
        \end{aligned}
    \end{equation*}
    uniformly for $x\in B^h\cap B$.
    Here $\int_{B^h\cap B}1\,dV^h=O(1)$ because $B^h\cap B\subset B(0,R)\cap \Z_h^8$.

    On the other hand, $\bm {\tilde f}\in C^3_0(\R^8,\OO)$ implies
    \begin{equation}
        \label{2026-04-01 00:00:04}
        \max_{\R^8}\Vert \bm D^h\bm{\tilde f}\Vert =O(1).
    \end{equation}
    Since the family $B^h$ has exterior excess of order $O(h^2)$, we have
    \begin{equation}
        \label{2026-04-01 00:00:05}
        V^h(B^h\setminus B)=O(h^2).
    \end{equation}
    Applying Hölder's inequality again and using \eqref{2026-04-01 00:00:02}, \eqref{2026-04-01 00:00:04}, and \eqref{2026-04-01 00:00:05}, we infer
    \begin{equation*}
        \begin{aligned}
            |I_2(x)|
            &\le \max_{\R^8}\Vert \bm D^h \bm{\tilde f}\Vert
            \left(\int_{B(0,R)\cap \Z_h^8} |\E^h(x-y)|^p dV^h(y)\right)^{1/p}
            \left(V^h(B^h\setminus B)\right)^{1/q}\\
            &=O\!\left(h^{2/q}\right),
        \end{aligned}
    \end{equation*}
    uniformly for $x\in B^h\cap B$.
    Since $q<+\infty$, the latter term tends to zero as $h\to 0+$.

    Combining the bounds for $I_1$ and $I_2$, we conclude that
    \begin{equation*}
        \max_{x\in B^h\cap B}\Vert \f(x)-\f^h(x)\Vert \to 0
        \qquad (h\to 0+).
    \end{equation*}
    This completes the proof.
\end{proof}

Next, we demonstrate that when the scaling limit of discrete regular functions is present, it is indeed regular.

\begin{theorem}\label{thm:scaling-limit-regularity}\label{2023-09-11 20:33:47}
    Suppose $B$ is a bounded domain in $\R^8$ and $B^h \subset  \Z^8_h$ converges to $B$. If a function $\f \in C^1(B,\OO)$ is the scaling limit of discrete regular functions $\f^h : B^h \longrightarrow \OO,$ i.e., \[
\lim_{ h\to 0+} \max_{ B^h \cap B} \Vert \f-\f^h\Vert  =0,
\] then $\f$ is regular on $B$.
\end{theorem}

The proof of \Cref{2023-09-11 20:33:47} relies on the next two lemmas.

\begin{lemma}\label{2023-09-11 21:51:03}
    Let $B$ be a bounded open set in $\R^8$ and assume $B^h\subset \Z^8_h$ converges to $B$. Then for any $U \subset\subset B,$ there exists $\delta>0$ such that
    \[
U \cap \Z^8_h \subset B^h
\] whenever $h<\delta.$
\end{lemma}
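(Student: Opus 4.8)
The plan is to argue by contradiction, exploiting the fact that a set compactly contained in the open set $B$ sits at a fixed positive distance from the complement $\R^8\setminus B$, and hence from $\p B$. Concretely, since $\overline U$ is compact with $\overline U\subset B$ and $B$ is open, the number $d:=\operatorname{dist}(\overline U,\R^8\setminus B)$ is strictly positive, and because $\p B\subset \R^8\setminus B$ we get $\operatorname{dist}(x,\p B)\geqslant d$ for every $x\in U$. Suppose the claim failed. Then there would be a sequence $h_k\to 0$ together with lattice points $x_k\in U\cap \Z^8_{h_k}$ satisfying $x_k\notin B^{h_k}$; the goal is to produce from each $x_k$ a point of $\p B^{h_k}$ that is simultaneously forced to be far from $\p B$ (because it is close to $x_k$) and close to $\p B$ (by the convergence hypothesis), which is absurd.

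To build such a boundary point I would first invoke the third convergence identity in \Cref{2023-09-11 10:00:54}: writing $\varepsilon_k:=\max_{\alpha\in\overline B}\min_{\beta\in B^{h_k}}\Vert\alpha-\beta\Vert\to 0$ and applying it to $\alpha=x_k\in\overline B$, there is a point $\beta_k\in B^{h_k}$ with $\Vert x_k-\beta_k\Vert\leqslant\varepsilon_k$. Since $\beta_k$ and $x_k$ both lie in $\Z^8_{h_k}$, they can be joined by a monotone lattice path whose consecutive vertices differ by a single increment $\pm h_k e_l$ and whose total $\ell^1$-length equals $\Vert x_k-\beta_k\Vert_1\leqslant\sqrt 8\,\varepsilon_k$. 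Along this path the first vertex $\beta_k$ belongs to $B^{h_k}$ while the last vertex $x_k$ does not, so there is a last vertex $p_k$ lying in $B^{h_k}$ whose successor $q_k$ lies in $\Z^8_{h_k}\setminus B^{h_k}$ with $\Vert p_k-q_k\Vert=h_k$. By the definition of the discrete boundary, $N(p_k)$ meets both $B^{h_k}$ (it contains $p_k$) and its complement (it contains $q_k$), so $p_k\in\p B^{h_k}$; moreover every vertex on the path, $p_k$ included, satisfies $\Vert p_k-x_k\Vert\leqslant\sqrt 8\,\varepsilon_k$.

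The two estimates then collide. On one hand $\operatorname{dist}(p_k,\p B)\geqslant \operatorname{dist}(x_k,\p B)-\Vert p_k-x_k\Vert\geqslant d-\sqrt 8\,\varepsilon_k$, which exceeds $d/2$ once $k$ is large. On the other hand the second convergence identity gives $\operatorname{dist}(p_k,\p B)\leqslant\max_{\alpha\in\p B^{h_k}}\min_{\beta\in\p B}\Vert\alpha-\beta\Vert\to 0$, which is below $d/2$ for large $k$. This contradiction proves the lemma. I expect the main obstacle to be the middle step: one must extract a genuine boundary lattice point $p_k$ from the mere knowledge that $x_k\notin B^{h_k}$ while a nearby point $\beta_k$ does lie in $B^{h_k}$, and then control its distance to $x_k$. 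The monotone-path transition argument, together with the equivalence of the $\ell^1$ and $\ell^2$ norms on $\R^8$, is what makes this quantitative and lets the two convergence identities do their work.
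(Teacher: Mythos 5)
Your proof is correct, and its engine is the same as the paper's: both arguments use the second and third convergence identities from \Cref{2023-09-11 10:00:54} together with a discrete intermediate-value argument along a lattice path (a path joining a point of $B^h$ to a point of $\Z^8_h\setminus B^h$ must contain a transition pair, which produces a point of $\p B^h$), and both derive the contradiction from the fact that $U$ sits at a fixed positive distance $d$ from $\p B$. The difference is in how the path is confined. The paper first reduces ``without loss of generality'' to $U=B(x_0,R)$, proves the two set-level statements $U\cap B^h\neq\emptyset$ and $U\cap\p B^h=\emptyset$, and then appeals to the discrete connectedness of $B(x_0,R)\cap\Z^8_h$ to keep the path inside $U$. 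You instead work pointwise at an offending $x_k\notin B^{h_k}$, use the quantitative rate $\varepsilon_k\to 0$ from the third identity to find $\beta_k\in B^{h_k}$ with $\Vert x_k-\beta_k\Vert\leqslant\varepsilon_k$, and keep the whole monotone path within $\ell^1$-distance $\sqrt{8}\,\varepsilon_k$ of $x_k$, so the extracted boundary point $p_k\in\p B^{h_k}$ is automatically far from $\p B$. This buys you two things: you handle an arbitrary $U\subset\subset B$ directly, with no reduction to balls, and you do not need the (unproved in the paper) assertion that $B(x_0,R)\cap\Z^8_h$ is discrete connected. The only point to state explicitly is that along a coordinate-monotone lattice path every vertex $v$ satisfies $\Vert v-x_k\Vert\leqslant\Vert v-x_k\Vert_1\leqslant\Vert\beta_k-x_k\Vert_1$, which is what justifies the bound $\Vert p_k-x_k\Vert\leqslant\sqrt{8}\,\varepsilon_k$; with that spelled out, the argument is complete.
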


\begin{proof}
    Without loss of generality, we assume that $U = B(x_0, R) \subset\subset B.$

    From the assumption that $B^h$ converges to $B$, we have \[
\lim_{h\to 0+} \max_{\alpha\in \overline{B}}\min_{\beta\in B^h}\Vert \alpha-\beta\Vert =0.
\] Hence, for any given $\varepsilon>0$, we have $x_0\in B^h+B(0,\varepsilon)$ when $h$ is sufficiently small. In particular, by taking $\varepsilon=R$, we obtain $U\cap B^h\neq \emptyset$.

    On the other hand, we denote \[
d=\inf_{\alpha\in U,\beta \in \p B}\Vert \alpha-\beta \Vert.
\]
    From the assumption that $B^h$ converges to $B$, it follows that \[
\lim_{h\to 0+}\max_{\alpha\in \p B^h}\min_{\beta\in \p B}\Vert \alpha-\beta \Vert=0.
\]
    For any $h$ sufficiently small, we thus have \[
\p B^h \subset \p B+B(0,d)
\] so that \[
\p B^h\cap U\subset (\p B+B(0,d))\cap U.
\] Since $(\p B+B(0,d))\cap U=\emptyset,$ we obtain \[
\p B^h\cap U=\emptyset.
\]

   We have now established that for sufficiently small $h$
    both conditions
     \[
U \cap B^h \neq \emptyset, \qquad  U \cap \p B^h = \emptyset
\]
  hold true.

    Now we have proved that, when $h$ is sufficiently small, \[
U \cap B^h \neq \emptyset
\] and \[
U \cap \p B^h = \emptyset
\] are both valid.

    Based on these facts, we come to show that \[
U\cap \Z^8_h \subset B^h
\]
    for all sufficiently  small $h$.

    Assume this is not valid, then there exists $h^*$ such that \begin{equation*}
        U \cap B^{h^*} \neq \emptyset, \quad U \cap \p B^{h^*} = \emptyset, \quad U\cap \Z_{h^*}^8\nsubseteq B^{h^*}.
    \end{equation*}
    From the fact that \[
\alpha,\beta\in U\cap \Z^8_{h^*}=B(x_0,R)\cap \Z^8_{h^*}
\] and $B(x_0,R)\cap \Z^8_{h^*}$ is discretely connected in the grid $\Z^8_{h^*}$, we can derive that there exists a sequence \[
\{x_k\}_{k=1}^m \subset U \cap \Z^8_{h^*}
\] such that
    \begin{equation*}
        x_1=\alpha,\quad x_m=\beta,\quad \Vert x_{k+1}-x_k\Vert =h^*
    \end{equation*}
    for each $k=1,2,\cdots,m-1.$

     Notice that \[
\alpha \in B^{h^*}, \qquad \beta\notin B^{h^*},
\] one can find $k^*$ with $1 \leqslant k^* \leqslant m - 1$ such that \begin{equation*}
        x_{k^*}\in B^{h^*},\quad  x_{k^*+1}\notin B^{h^*}.
    \end{equation*}
    According to the definition of discrete boundaries, $x_{k^*}$ and $x_{k^*+1}$ are both in $\p B^{h^*}$ . It violates the assumption that $\p B^{h^*}\cap U=\emptyset$. The proof is complete.
\end{proof}

\begin{lemma}\label{2023-09-11 22:31:45}
    Let $B$ be a bounded open set in $\R^8$ and $B^h \subset \Z^8_h$ converges to $B$. Then we have \[
\lim_{h\to 0+} \int_{B^h\cap B}\f dV^h=\int_B\f dV
\]
    for any $\f\in C_0(B,\OO).$
\end{lemma}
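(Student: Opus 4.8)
The goal is to show that the discrete Riemann-type sum $\int_{B^h\cap B}\f\,dV^h=\sum_{x\in B^h\cap B}\f(x)h^8$ converges to the Lebesgue integral $\int_B \f\,dV$ for every $\f\in C_0(B,\OO)$. The plan is to compare the discrete sum against the genuine Riemann sum for $\f$ over the full cubical lattice $\Z^8_h$, and then to control the discrepancy between the index set $B^h\cap B$ and the domain $B$ using the convergence hypotheses of \Cref{2023-09-11 10:00:54}.

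First I would dispose of the tails: since $\f$ has compact support $K=\operatorname{supp}\f\subset\subset B$, only finitely many lattice points contribute for each fixed $h$, so the sum is well defined and finite. By \Cref{2023-09-11 21:51:03} applied to a relatively compact open set $U$ with $K\subset\subset U\subset\subset B$, there is a $\delta>0$ such that $U\cap\Z^8_h\subset B^h$ for all $h<\delta$; hence for such $h$ every lattice point of $K$ lies in $B^h$, and moreover lies in $B$ since $K\subset B$. Consequently $\sum_{x\in B^h\cap B}\f(x)h^8=\sum_{x\in K\cap\Z^8_h}\f(x)h^8$ once $h$ is small enough, so the restriction to $B^h\cap B$ becomes immaterial and the problem reduces to the convergence of the plain Riemann sum $\sum_{x\in\Z^8_h}\f(x)h^8$ to $\int_{\R^8}\f\,dV=\int_B\f\,dV$.

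The remaining step is the classical fact that the Riemann sum of a compactly supported continuous function over the uniform mesh $h\Z^8$, with cell volume $h^8$, converges to the Lebesgue integral. This follows from uniform continuity of $\f$ on the compact set $K$: partition $\R^8$ into the half-open cubes $Q_x=x+[0,h)^8$ centered at lattice points, write $\int_{\R^8}\f\,dV=\sum_{x\in\Z^8_h}\int_{Q_x}\f\,dV$, and estimate $\bigl|\f(x)h^8-\int_{Q_x}\f\,dV\bigr|\leqslant h^8\sup_{y\in Q_x}\|\f(x)-\f(y)\|\leqslant h^8\,\omega_\f(\sqrt{8}\,h)$, where $\omega_\f$ is the modulus of continuity of $\f$. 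Summing over the $O(h^{-8})$ cubes that meet $K$ gives a total error $O(\omega_\f(\sqrt 8\,h))=o(1)$ as $h\to 0^+$.

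The main obstacle, and the only place the geometry of $B^h$ enters, is the first step: guaranteeing that the index set $B^h\cap B$ actually captures all lattice points of the support $K$, so that replacing $B^h\cap B$ by the full lattice introduces no error. This is precisely what \Cref{2023-09-11 21:51:03} delivers, since $K\subset\subset B$ lets us interpose a set $U$ with $K\subset\subset U\subset\subset B$ and conclude $U\cap\Z^8_h\subset B^h$ for small $h$; the compact support is what makes the convergence hypotheses on $B^h$ sufficient despite $B^h$ possibly misbehaving near $\p B$. Once this inclusion is secured, the remainder is the standard Riemann-sum argument and requires no further structure.
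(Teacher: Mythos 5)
Your proposal is correct and follows essentially the same route as the paper: both interpose a relatively compact set $U$ with $\operatorname{supp}\f\subset\subset U\subset\subset B$, invoke \Cref{2023-09-11 21:51:03} to get $U\cap\Z^8_h\subset B^h$ for small $h$, reduce the sum over $B^h\cap B$ to the plain Riemann sum of $\f$ over the lattice, and conclude by Riemann-sum convergence. The only difference is that you spell out the modulus-of-continuity estimate for that last step, which the paper simply asserts.
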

\begin{proof}
	Let's consider any function
	$\f$
  from the set
	 $C_0(B,\OO)$.
	 We can pinpoint a set
	$U$,  which is the finite union of balls, fulfilling the condition:
   \begin{equation}
        \label{2023-09-11 21:52:53}\mbox{supp} \f\subset U\subset\subset B.
    \end{equation}

From the evidence provided in \Cref{2023-09-11 21:51:03}, when
$h$ is sufficiently small, it is  clear that  \[
U\cap  \Z^8_h \subset B^h\subset \Z_h^8,
\]
   which implies the equality  $B^h\cap U=U\cap \Z^8_h.$

  Utilizing both the above relation and \eqref{2023-09-11 21:52:53}, we can deduce
  \begin{equation*}
        \int_{B^h\cap B}\f dV^h=\int_{B^h\cap U}\f dV^h=\int_{U\cap \Z^8_h}\f dV^h.
    \end{equation*}

It is worth noting that the latter integral serves as a Riemann sum for the Riemann integral
  $\int_U \f dV$. Hence we obtain  \begin{equation*}
        \lim_{h\to 0+}\int_{B^h\cap B}\f dV^h=\lim_{h\to 0+}\int_{U\cap \Z^8_h} \f dV^h=\int_{U}\f dV=\int_B \f dV.
    \end{equation*}
   This concludes our proof.
\end{proof}

We now turn our attention to proving \Cref{2023-09-11 20:33:47}.

\begin{proof}[Proof of \Cref{2023-09-11 20:33:47}]
    Assume that a sequence of discrete regular functions $\f^h : B^h \longrightarrow \OO$ converges uniformly on $B^h\cap B$ to a function $\f\in C^1(B,\OO)$. We prove that $\f$ is regular on $B$.

    Since $\f\in C^1(B,\OO)$, it is enough to verify that $\bm D\f=0$ in the sense of distributions. Equivalently, for every real-valued test function $\phi\in C_0^\infty(B,\R)$, we must show that
    \begin{equation}
        \label{2026-04-01 00:00:06}
        \int_B \sum_{l=0}^7 (\p_l\phi)(x)\,\e_l\f(x)\,dV(x)=0.
    \end{equation}
    Indeed, because $\phi$ is real-valued, integration by parts gives
    \[
    \int_B \sum_{l=0}^7 (\p_l\phi)(x)\,\e_l\f(x)\,dV(x)
    =-\int_B \phi(x)\,(\bm D\f)(x)\,dV(x).
    \]

    Fix such a test function $\phi$, and define
    \[
    g(x):=\sum_{l=0}^7 (\p_l\phi)(x)\,\e_l\f(x)\in C_0(B,\OO).
    \]
    By \Cref{2023-09-11 22:31:45},
    \[
    \int_{B^h\cap B} g(x)\,dV^h(x) \longrightarrow \int_B g(x)\,dV(x)
    \qquad (h\to 0+).
    \]
    Hence it is enough to prove that
    \[
    J_h:=\int_{B^h\cap B} g(x)\,dV^h(x) \longrightarrow 0.
    \]

    We decompose $J_h$ as
    \[
    J_h=A_h+B_h+C_h,
    \]
    where
    \[
    \begin{aligned}
        A_h&:=\int_{B^h\cap B}\sum_{l=0}^7 (\p_l^h\phi)(x)\,\e_l\f^h(x)\,dV^h(x),\\
        B_h&:=\int_{B^h\cap B}\sum_{l=0}^7 (\p_l^h\phi)(x)\,\e_l\bigl(\f(x)-\f^h(x)\bigr)\,dV^h(x),\\
        C_h&:=\int_{B^h\cap B}\sum_{l=0}^7 \bigl((\p_l\phi)(x)-(\p_l^h\phi)(x)\bigr)\,\e_l\f(x)\,dV^h(x).
    \end{aligned}
    \]

    We first show that $A_h=0$ for all sufficiently small $h$. Choose an open set $U$ such that
    \[
    \operatorname{supp}\phi \subset U \subset\subset B.
    \]
    Since central differences only shift by one lattice step, we also have $\operatorname{supp}(\p_l^h\phi)\subset U$ for all sufficiently small $h$. By \Cref{2023-09-11 21:51:03}, after shrinking $h$ if necessary, we may assume that $U\cap \Z_h^8\subset B^h$.

    For each fixed $l$, the function $\phi$ has compact support, so discrete summation by parts on $\Z_h^8$ gives
    \[
    \begin{aligned}
        \int_{\Z_h^8} (\p_l^h\phi)(x)\,\e_l\f^h(x)\,dV^h(x)
        &= \frac{h^8}{2h}\sum_{x\in \Z_h^8} \bigl(\phi(x+he_l)-\phi(x-he_l)\bigr)\e_l\f^h(x)\\
        &= -\frac{h^8}{2h}\sum_{x\in \Z_h^8} \phi(x)\,\e_l\bigl(\f^h(x+he_l)-\f^h(x-he_l)\bigr)\\
        &= -\int_{\Z_h^8} \phi(x)\,\e_l\,(\p_l^h\f^h)(x)\,dV^h(x).
    \end{aligned}
    \]
    Summing over $l=0,\dots,7$, and using the facts that $\operatorname{supp}\phi\subset U\cap \Z_h^8\subset B^h$ and $\bm D^h\f^h=0$ on $B^h$, we obtain
    \[
    A_h=-\int_{\Z_h^8}\phi(x)\,(\bm D^h\f^h)(x)\,dV^h(x)=0.
    \]

    Next, because $\phi\in C_0^\infty(B,\R)$, the discrete derivatives $\p_l^h\phi$ are uniformly bounded on $\R^8$ for $0<h\le 1$. Since $B^h\cap B\subset B(0,R)\cap \Z_h^8$ for some fixed $R>0$, \Cref{2023-09-11 19:50:23} yields
    \[
    \int_{B^h\cap B}1\,dV^h=O(1).
    \]
    Therefore,
    \[
    |B_h|\le C\,\max_{B^h\cap B}\Vert \f-\f^h\Vert\int_{B^h\cap B}1\,dV^h=o(1).
    \]

    Finally, since $\phi\in C_0^\infty(B,\R)$, the central differences converge uniformly to the corresponding derivatives:
    \[
    \max_{\R^8}|\p_l^h\phi-\p_l\phi|\to 0 \qquad (h\to 0+).
    \]
    Because $\f$ is bounded on a neighborhood of $\operatorname{supp}\phi$ and again $\int_{B^h\cap B}1\,dV^h=O(1)$, we obtain
    \[
    |C_h|\le C\,\max_{0\le l\le 7}\max_{\R^8}|\p_l^h\phi-\p_l\phi|\int_{B^h\cap B}1\,dV^h=o(1).
    \]

    We have shown that $A_h=0$, $B_h=o(1)$, and $C_h=o(1)$. Hence $J_h\to 0$, and therefore \eqref{2026-04-01 00:00:06} holds. This proves that $\bm D\f=0$ in the distributional sense. Since $\f\in C^1(B,\OO)$, it follows that $\f$ is regular on $B$.
\end{proof}

In conclusion, a function is deemed regular precisely when it stands as the scaling limit of discrete regular functions.

\begin{theorem}
	Suppose $B$ is a bounded domain with $C^3$-boundary in $\R^8$, and let $B^h\subset \Z_h^8$ be a family of lattice sets converging to $B$ with exterior excess $O(h^2)$ in the sense of \Cref{2023-09-11 10:00:54}. Then a function $\f \in C^3(\overline B,\OO)$ is regular if and only if there exist lattice functions $\f^h : \Z_h^8\longrightarrow \OO$ that are discrete regular on \(B^h\) and such that $\f$ is the scaling limit of $\f^h$.
\end{theorem}

\begin{proof}
The result is a direct consequence of merging Theorems \ref{2023-09-11 20:34:07} and \ref{2023-09-11 20:33:47}.
\end{proof}

\bigskip\bigskip
\bigskip\bigskip

\noindent\textbf{Declarations.}

\bigskip\bigskip

\noindent {\bf Funding, conflicts of interest, and competing interests:} 
This work was supported by the NNSF of China (12571090).
 The authors declare that they have no conflicts of interest and no competing interests.  The authors read and approved the final manuscript.

\bigskip\bigskip


\begin{thebibliography}{99}

\bibitem{ACS} D.  Alpay, F. Columbo,K. Diki, I.  Sabadini, Volok D.   Discrete analytic functions, structured matrices and a new family of moment problems. Bull. Sci. Math. 179 (2022), Paper No. 103175, 70 pp.





\bibitem{B2002}
J. Baez, \emph{The octonions},
Bull. Amer. Math. Soc.,   \textbf{39}(2): 145-205 (2002).







\bibitem{BL}
 I. Benjamini,  L. Lov\'asz,   Harmonic and analytic functions on graphs. Combinatorics,   J. Geom. 76 (2003), no. 1-2, 3–15.












\bibitem{BMS}
A. I. Bobenko, C. Mercat, and Y. B. Suris, Linear and nonlinear theories of discrete analytic
functions. Integrable structure and isomonodromic Green’s function, J. Reine Angew. Math.
583 (2005), 117–161.




\bibitem{GP1}    G. M.  Cabrera,   P. \'A Rodríguez,  Non-associative normed algebras. Vol. 1. The Vidav-Palmer and Gelfand-Naimark theorems. Encyclopedia of Mathematics and its Applications, 154. Cambridge University Press, Cambridge, 2014.






\bibitem{GP2}     G. M. Cabrera,   P. \'A Rodríguez,  Non-associative normed algebras. Vol. 2. Representation theory and the Zel'manov approach. Encyclopedia of Mathematics and its Applications, 167. Cambridge University Press, Cambridge, 2018.



\bibitem{CKK}
 P. Cerejeiras, U. K\"ahler, M. Ku,   Discrete Hilbert boundary value problems on half lattices. J. Differ. Equ. Appl. 21(12), 1–28 (2015)




\bibitem{CKS}
P. Cerejeiras,  U.  K\"ahler, M.  Ku,  F.  Sommen,   Discrete Hardy spaces. J. Fourier Anal. Appl. 20,
715–750 (2014)


    \bibitem{CS2011}
    D. Chelkak, S. Smirnov,  { Discrete complex analysis on isoradial   graphs},
     Adv. Math. \textbf{228}, 1590-1630 (2011).


    \bibitem{CS2012}
    D. Chelkak, S. Smirnov,
    {Universality in the 2D Ising model and conformal invariance of fermionic observables}, Invent. Math.  {189}(3), 515-580 (2012).


\bibitem{CFL1928}
R. Courant, K. Friedrichs, H. Lewy,
{ Uber die partiellen Differenzengleichungen der mathematischen Physik}, Math. Ann.  {100}, 32-74 (1928).

\bibitem{CSSS2004}
F. Colombo,   I. Sabadini,   F. Sommen, D. C. Struppa,  Analysis of Dirac systems and computational algebra. Progress in Mathematical Physics, 39. Birkh\"{a}user Boston, Inc., Boston, MA, 2004.









\bibitem{Du}
R. J. Duffin,  Basic properties of discrete analytic functions. Duke Math. J. 23(2), 335–363 (1956)






\bibitem{Ferrand}
J. Ferrand,  Fonctions pr\'{e}harmoniques et fonctions pr\'{e}holomorphes. Bull. Sci. Math. (2) 68 (1944), 152-180.






\bibitem{G}
A. Grigor'yan,   Introduction to analysis on graphs. University Lecture Series, 71. American Mathematical Society, Providence, RI, 2018.




\bibitem{Gu}
X. D. Gu,  S. T.  Yau,  Computational conformal geometry. Advanced Lectures in Mathematics (ALM), 3. International Press, Somerville, MA; Higher Education Press, Beijing, 2008.



\bibitem{GH2001}
K. G\"{u}rlebeck,  A. Hommel,
{ On finite difference Dirac operators and their fundamental solutions},
Adv. Appl. Clifford Algebr.  {11}, 89-106 (2001).



\bibitem{GS2}
K. G\"{u}rlebeck,  W. Spr\"{o}ssig,   Quaternionic Analysis and Elliptic Boundary Value Problems. International Series of Numerical Mathematics, 89. Birkh\"{a}user Verlag, Basel, 1990.


      \bibitem{HR}
Q. H. Huo, G. B.   Ren,   Structure of octonionic Hilbert spaces with applications in the Parseval equality and Cayley-Dickson algebras. J. Math. Phys. 63 (2022), no. 4, Paper No. 042101, 24 pp.







\bibitem{Isaac}
 R. P. Isaacs,  A finite difference function theory. Univ. Nac. Tucum\'{a}n. Revista A. 2, (1941), 177-201.










\bibitem{KLL}    S. Karigiannis, N. C. Leung, J. D. Lotay,   Lectures and Surveys on $G_2$-manifolds and Related Topics.  Fields Institute Communications, 84. Springer, New York,  2020.

\bibitem{Kr}
B. Krause,   Discrete analogues in harmonic analysis—Bourgain, Stein, and beyond. Graduate Studies in Mathematics, 224. American Mathematical Society, Providence, RI (2022).


   
















    \bibitem{KLL2021}
    R. S. Krausshar, A. Legatiuk, and D. Legatiuk,
    {    Towards Discrete Octonionic Analysis},  International Conference on Differential Equations, Mathematical Modeling and Computational Algorithms. Cham: Springer International Publishing,  51-63 (2021).


\bibitem{KL}
R. S. Krausshar, D. Legatiuk,
Cauchy formulae and Hardy spaces in discrete octonionic analysis,
arXiv:2305.04121v1, 2023.



\bibitem{LPT2008}
X. Li, L. Peng, T. Qian,
 {Cauchy integrals on Lipschitz surfaces in octonionic space},
  J. Math. Anal. Appl, {343}(2): 763-777 (2008).

\bibitem{Me}
C. Mercat, Discrete Riemann surfaces and the Ising model, Comm. Math. Phys. 218:1 (2001),
177–216.





\bibitem{RZ2023}    G. B. Ren, Y. C. Zhang, Hartogs phenomena for discrete $k$-Cauchy-Fueter operators. J. Geom. Anal. 33 (2023), no. 8. 



        \bibitem{RZhao} G. B. Ren, X. Zhao,  The explicit twisted group algebra structure of the Cayley-Dickson algebra. Adv. Appl. Clifford Algebr. 33 (2023), no. 4, Paper No. 49, 14 pp.


    \bibitem{RZ2017}
G. B. Ren, Z. P.  Zhu,
 {A Convergence Relation Between Discrete
	and Continuous Regular Quaternionic
	Functions},  Adv. Appl. Clifford Algebr. \textbf{27}, 2017, no. 2, 1715-1740.

\bibitem{RZ2022}
G. B.  Ren, Z. P. Zhu,
 {Plemelj projections in discrete quaternionic analysis}. Math. Ann. \textbf{382}(2022), no.1-2, 975-1025.



\bibitem{S1954}
R. D. Schafer,
 {On the Algebras Formed by the Cayley-Dickson Process}, Am. J. Math.,  {76}(2), 435-446 (1954).




\bibitem{SW1979}
P. N. Shivakumar, R. Wong,  {Asymptotic expansion of multiple Fourier transforms}. SIAM J. Math. Anal. \textbf{10}(6), 1095-1104 (1979).




\bibitem{Sk}
M. Skopenkov,  The boundary value problem for discrete analytic functions. Adv. Math. 240, 61–87
(2013)



\bibitem{Smirnov}
S. Smirnov,   Conformal invariance in random cluster models. I. Holomorphic fermions in the Ising model. Ann. of Math. (2) 172 (2010), no. 2, 1435-1467.



\bibitem{Sm}
S. Smirnov, Discrete complex analysis and probability, in: Proc. Intern. Cong. Math. Hyderabad, India, 2010.




\bibitem{S}
R. J. Szabo,  An introduction to nonassociative physics,
arXiv:1903.05673, 2019








\bibitem{Th}
V. Thomée,  Discrete interior Schauder estimates for elliptic difference operators.
SIAM J. Numer. Anal. 5 (1968), 626–645.




\bibitem{Wo}
R. Wong,   Asymptotic approximations of integrals. Corrected reprint of the 1989 original. Classics in Applied Mathematics, 34. Society for Industrial and Applied Mathematics, Philadelphia, PA, 2001.



\bigskip\bigskip\bigskip
\end{thebibliography}
\end{document}